\documentclass[11pt]{article}
\usepackage{subfigure}
\usepackage{authblk}
\usepackage{hyperref}
\usepackage{float}
\usepackage{mathtools}

\usepackage{amssymb,amsthm,amsmath,mathrsfs,fullpage,multirow}
\usepackage{tikz}
\usetikzlibrary{matrix}

\makeatletter
\def\keywords{\xdef\@thefnmark{}\@footnotetext}
\makeatother

\renewcommand\S{\mathcal S}
\newcommand\A{\mathcal A}
\newcommand\cB{\mathcal B}
\newcommand\cZ{\mathcal Z}

\newcommand\C{\mathcal C}

\newtheorem{theorem}{Theorem}[section]
\newtheorem{lemma}[theorem]{Lemma}
\newtheorem{algorithm}[theorem]{Algorithm}
\newtheorem{proposition}[theorem]{Proposition}
\newtheorem{corollary}[theorem]{Corollary}
\newtheorem{conjecture}[theorem]{Conjecture}
\theoremstyle{definition}
\newtheorem{definition}[theorem]{Definition}
  \newtheorem{question}[theorem]{Question}
\newtheorem{remark}[theorem]{Remark}
\newtheorem{example}[theorem]{Example}

\usepackage{amsmath}

\newcommand{\thistheoremname}{}
\newtheorem*{genericthm*}{\thistheoremname}
\newenvironment{namedthm*}[1]
  {\renewcommand{\thistheoremname}{#1}%
   \begin{genericthm*}}
  {\end{genericthm*}}

\usepackage{enumerate}

\title{A lower bound on the growth rate of $(132,213)$-avoiding \\ cyclic permutations}
\author{Robert Laudone}
\affil{{\small Department of Mathematics, United States Naval Academy, Annapolis, MD, 21402}\\{\small Email: laudone@usna.edu }}

\date{}

\begin{document}

\keywords{2020 \emph{Mathematics Subject Classification.} Primary 05A05}%
\keywords{\emph{Keywords:} Pattern avoidance; cyclic permutation}%

\maketitle

\begin{abstract}
    We construct a new reduction process which takes a $(132,213)$-avoiding permutation to a shorter one that is cyclic if and only if the original was. Iterating it determines whether a given $(132,213)$-avoiding permutation is cyclic. Reversing it gives four moves that build every cyclic $(132,213)$-avoiding permutation, uniquely, from $1$ if $n$ is odd, and $21$ if $n$ is even. Our main application is the first non-trivial lower bound for the growth rate of $\C_n(132,213)$, the cyclic permutations of length $n$ avoiding $132$ and $213$. We also give several other consequences of the reduction, including a bijection between the odd and even size classes and an exact enumeration for those permutations with a restricted number of layers.
\end{abstract}

\section{Introduction}

There is an interesting tension between avoidance in a permutation's one-line notation and how it decomposes into cycles. Consider $\pi = 4567312$. In one-line notation it is easily seen to be three decreasing blocks of increasing runs, so one can immediately tell it avoids $132$ and $213$, but we are largely blind to the cycle structure. Its cycle notation, $\pi = (1,4,7,2,5,3,6)$, shows at once that $\pi$ is cyclic, but it hides the avoidance. The two perspectives don't easily see one another. This has led to many interesting results and still open questions. For example, authors have studied how avoidance interacts with restricted cycle type \cite{AG22,AL25,BD20,DRS07}. In \cite{BT22}, Berman and Tenner introduced arrow patterns as a potential way to bridge this gap while studying shallow permutations. There have also been results enumerating classes of permutations that avoid a pattern in their one line notation, and whose cycle notation in some way avoids a possibly different pattern \cite{ABBGJ23, AL25-pa}.

The most famous question in this area was originally posed by Richard Stanley, who asked: how many cyclic permutations of length $n$ avoid a permutation $\sigma \in \S_3$? We denote these permutations by $\C_n(\sigma)$ and let $c_n(\sigma) = |\C_n(\sigma)|$. This problem has proved very challenging, as evidenced by the fact that all of the cases of $\C_n(\sigma)$ for $\sigma \in \S_3$ are still open.

Even understanding how these classes grow has been difficult. In \cite[Conjecture 5.2]{BC19}, B\'ona and Cory conjectured $2c_{n-1}(\sigma) \leq c_n(\sigma) \leq 4 c_{n-1}(\sigma)$ for $\sigma \in \S_3$. In \cite{AGL25}, we proved and sometimes improved the lower bounds of these conjectures for $\sigma \not= 123$ by showing cyclic permutations avoiding a given pattern admit a partial groupoid structure. In \cite{La26}, we showed that $\C_n(321)$ can be characterized purely by vincular avoidance in the image of Foata's fundamental bijection, which gave both structural results and growth bounds. Exact enumeration remains elusive in every one of these cases.

The case of cyclic permutations that avoid two patterns of size three has been more tractable. The number of cyclic permutations avoiding a consecutive $123$ or $321$ pattern was enumerated in \cite{ET19}, and the authors of \cite{AE14, BC19} enumerated all cyclic permutations avoiding a pair of patterns of length three, with a single exception: the pair $(132,213)$. For this remaining case, the only progress has been an upper bound of $n^2 \cdot 2^{n/2}$, giving $\sqrt{2} \approx 1.4142$ as an upper bound on the exponential growth rate, established in \cite[Theorem 1.4]{H19}. No non-trivial lower bound was known. Our main result supplies one,

\begin{namedthm*}{Theorem \ref{thm: lowerBound}}
    $\liminf_{n \to \infty} |\C_{n}(132,213)|^{1/n} \geq  1.23693$.
\end{namedthm*}

For comparison, Huang experimentally estimated the growth rate to be around $1.301$ \cite[Section 5]{H19}. The driving tool behind this paper is a new reduction process (Theorem \ref{thm: reduction}) that checks whether a given $\pi \in \S_n(132,213)$ is cyclic. Our reduction works for every $\pi \in \S_n(132,213)$, not just the balanced ones in the sense of \cite{H19} (see Section \ref{sec: background} for the definition of balanced). 

The reduction turns out to be useful well beyond the bound. Its inverse moves (Lemma \ref{lem: moves}) let us build every element of $\C_n(132,213)$ uniquely from the permutations $1$, if $n$ is odd, or $21$ if $n$ is even, which is what drives the lower bound. It recovers Huang's parity result and also produces a bijection between the odd and even size classes (Theorem \ref{thm: bijs-odd-even}). Finally, it yields exact enumerations for those $\pi \in \C_n(132,213)$ whose corresponding compositions have length two or three (Section \ref{sec: restricted-parts}).

The paper is structured as follows. In Section \ref{sec: background}, we set up notation and prove two structural lemmas. In Section \ref{sec: reduction}, we state and prove the reduction process, introduce the inverse moves and the signature of a permutation, and derive the consequences mentioned above. Then in Section \ref{sec: lower-bound}, we prove the lower bound. We also include a second approach which currently gives a weaker bound of $1.1748$ but which we think has room to improve. In Section \ref{sec: restricted-parts}, we give exact enumerations for permutations in $\C_n(132,213)$ whose corresponding compositions have restricted length, and in Section \ref{sec: balanced} we examine how the reduction interacts with Huang's balanced permutations. We conclude in Section \ref{sec: future-work} with some open questions and conjectures.

\section{Background and Notation} \label{sec: background}

Let $\S_n$ denote the symmetric group on $[n] = \{1,2,\dots,n\}$. We will focus on two main representations of a permutation. The first is its \textit{one-line notation}, $\pi = \pi_1\pi_2 \ldots \pi_n$ where $\pi_i = \pi(i)$. The second is its \emph{cycle notation}, expressing $\pi$ as a product of disjoint cycles. We say that a permutation is \emph{cyclic} if its cycle form consists of a single cycle. We denote the cyclic permutations of length $n$ by $\C_n$. We say that a permutation $\pi = \pi_1\cdots\pi_n$, written in one-line notation, \emph{contains} $\sigma = \sigma_1\sigma_2\ldots \sigma_k$ if there are indices $i_1 < i_2 < \cdots < i_k$ with $\pi_{i_r} < \pi_{i_s}$ if and only if $\sigma_r < \sigma_s$. In this case, we also say that $\pi_{i_1}\ldots \pi_{i_k}$ is order isomorphic to $\sigma$ and write ${\rm red}(\pi_{i_1}\ldots \pi_{i_k}) = \sigma$. If $\pi$ does not contain $\sigma$, we say $\pi$ \emph{avoids} $\sigma$. We denote the permutations of length $n$ avoiding a set of permutations $\{\sigma_1,\dots,\sigma_\ell\}$ by $\S_n(\sigma_1,\dots,\sigma_\ell)$, and cyclic permutations of length $n$ avoiding those patterns by $\C_n(\sigma_1,\dots,\sigma_\ell)$. 

The main focus of this paper will be $\C_n(132,213)$. $\S_n(132,213)$ is a special class of permutations, sometimes called \emph{reverse layered permutations}. This is because they have a very specific structure. To adequately describe this structure we need the following definitions. Given $\sigma = \sigma_1 \cdots \sigma_n \in \S_n$ and $\tau = \tau_1 \cdots \tau_m \in \S_m$, their \textit{direct sum} is $\sigma \oplus \tau = \sigma_1 \cdots \sigma_n (\tau_1 + n) \cdots (\tau_m + n)$ and their \textit{skew sum} is $\sigma \ominus \tau = (\sigma_1 + m) \cdots (\sigma_n + m) \tau_1\cdots \tau_m$. Furthermore, let $\rm{id}_n = 12\cdots n$ denote the identity permutation of length $n$. Permutations in $\S_n(132,213)$ are all of the form
\[
{\rm id}_{a_1} \ominus {\rm id}_{a_2} \ominus \cdots \ominus {\rm id}_{a_k}
\]
where $a_1 + a_2 + \cdots + a_k = n$. For example, our previous $\pi = 4567312$ is precisely ${\rm id}_{4} \ominus {\rm id}_{1} \ominus {\rm id}_{2}$. This makes it clear that $\pi \in \S_n(132,213)$ are precisely in bijection with compositions of size $n$. In this paper, a composition of size $n$ and length $k$ is a tuple of positive integers $(a_1,\dots,a_k)$ with $a_1+a_2+\cdots+a_k = n$.

\begin{definition}
    Given $\pi \in \S_n(132,213)$ with $\pi = {\rm id}_{a_1} \ominus {\rm id}_{a_2} \ominus \cdots \ominus {\rm id}_{a_k}$, we let $\kappa(\pi) = (a_1,a_2,\dots,a_k)$ be the composition corresponding to $\pi$. Similarly, $\kappa^{-1}(a_1,\dots,a_k)$ is the permutation $\pi \in \S_n(132,213)$ with $\kappa(\pi)=(a_1,\dots,a_k)$.
\end{definition}

We will often refer to the permutation and its composition interchangeably. Huang \cite[Section 2.2]{H19} defined a special subset of $\C_{2n}(132,213)$ which he called balanced. We will refer to them occasionally so we introduce the notation here. A permutation $\pi \in \C_{2n}(132,213)$ is \emph{balanced} if $\kappa(\pi) = (a_1,\dots,a_k)$ has some $1 \le j < k$ with $a_1 + a_2 + \cdots + a_j = n$. This means at some point the composition sums to precisely half of its size. We refer to this subset of balanced permutations as $\C_{2n}^B(132,213)$.

Our results will focus on the entries on the outer edges of the composition, so we develop some notation to make this easier to work with. Given $\pi \in \S_n(132,213)$ with $\kappa(\pi) = (a_1,a_2,\dots,a_{k-1},a_k)$, we call the elements on the outer boundary the \emph{active boundary condition}. We denote them as $a_1|a_k$ where $a_1$ is the left-most entry and $a_k$ is the right-most entry in the composition. If we write $a_1,a_2|$ we mean the composition begins with $a_1,a_2$. If we write $|a_{k-1},a_k$ we mean the composition ends with $a_{k-1},a_k$. We can think of the bar as representing all the remaining entries of the composition. 

To each composition, associate its set $S_c$ of partial sums. For example, the composition $(1,3,2,1)$ corresponds to $S_c = \{1,4,6\}$, we exclude $n$ from the set. Our ultimate goal is to understand which compositions of size $n$ correspond to $\pi \in \C_n(132,213)$. We now prove a couple structural results that will be useful in future sections,

\begin{lemma} \label{lem: balance}
    If $\pi \in \S_n(132,213)$ is cyclic, for $i \in [1,\dots,\lfloor (n-1)/2 \rfloor]$, we cannot have $i \in S_{\kappa(\pi)}$ and $n-i \in S_{\kappa(\pi)}$.
\end{lemma}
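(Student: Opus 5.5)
The plan is to exhibit a nonempty proper subset of $[n]$ that $\pi$ maps to itself; a permutation with such a subset cannot be a single $n$-cycle. The candidate subset comes from how $\pi = {\rm id}_{a_1} \ominus \cdots \ominus {\rm id}_{a_k}$ acts on the initial and final segments cut out by partial sums.

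First I would record the basic structural fact. Write $s_0 = 0$ and $s_j = a_1 + \cdots + a_j$. By the definition of skew sum, the $j$-th block of positions $(s_{j-1}, s_j]$ is sent increasingly onto the block of values $(n - s_j, n - s_{j-1}]$. Taking the union over $j \le m$ shows that if $s = s_m \in S_{\kappa(\pi)}$, then
\[
\pi([1,s]) = [n-s+1, n].
\]
Since $\pi$ is a bijection, it follows that $\pi([s+1,n]) = [1,n-s]$.

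Next, suppose $i \in [1, \lfloor (n-1)/2 \rfloor]$ with both $i$ and $n-i$ in $S_{\kappa(\pi)}$. Applying the fact with $s = i$ gives $\pi([1,i]) = [n-i+1,n]$. Applying its complementary form with $s = n-i$ gives $\pi([n-i+1,n]) = [1,i]$. Hence the set
\[
X = [1,i] \cup [n-i+1,n]
\]
satisfies $\pi(X) = X$.

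Finally, I would check that $X$ is nonempty and proper. The condition $i \le \lfloor (n-1)/2 \rfloor$ gives $2i \le n-1 < n$. So the two intervals are disjoint, $|X| = 2i$, and $1 \le |X| < n$. Thus $X$ is a union of cycles of $\pi$ that misses some element of $[n]$, so $\pi$ is not cyclic, which contradicts the hypothesis. There is no real obstacle here. The only care needed is the bookkeeping in the skew-sum description of where each block goes, and the use of the bound on $i$ to ensure $X \ne [n]$.
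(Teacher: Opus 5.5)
Your proof is correct and follows the paper's argument: both observe that $\pi$ sends positions $[1,i]$ onto $[n-i+1,n]$ and positions $[n-i+1,n]$ onto $[1,i]$, so $[1,i]\cup[n-i+1,n]$ is a union of cycles of size $2i<n$. Your bookkeeping is also cleaner than the paper's text, which lists the values in the first $i$ positions as $\{1,\dots,n-i\}$ when they should be $\{n-i+1,\dots,n\}$.
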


\begin{proof}
    If both $i$ and $n-i$ are in the set of partial sums, this means that the first $i$ and last $i$ elements of the permutation form a cycle. Indeed if $i$ is in the set of partial sums, the first $i$ elements of $\pi$ are $\{1,\dots,n-i\}$ and if $n-i$ is in the set of partial sums, the final $i$ elements are $\{1,\dots,i\}$. If $i \leq \lfloor (n-1)/2 \rfloor$, this means $\pi$ consists of at least two cycles.
\end{proof}

\begin{example}
    For example, consider the set of partial sums $\{2,3,5,6\}$ with $n = 7$. This corresponds to the composition $(2,1,2,1,1)$. If $i = 2$, we see that $2$ and $5$ are both in the set of partial sums. As a result, if we consider the corresponding permutation $\pi = 6753421$ the first two elements and final two elements form a cycle $(1,6,2,7)$.
\end{example}

We often only want to consider compositions $(a_1,\dots,a_k)$ where $a_1 > a_k$, the following result makes this possible,

\begin{proposition} \label{prop: reverse}
    For $n \geq 1$, $\kappa^{-1}(a_1,a_2,\dots,a_k) \in \C_n(132,213)$ if and only if $\kappa^{-1}(a_k,\dots,a_2,a_1) \in \C_n(132,213)$.
\end{proposition}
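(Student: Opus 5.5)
The plan is to realize the reversal of the composition as conjugation of $\pi$ by a fixed permutation, possibly combined with inversion. Both operations preserve cycle type, so both preserve being cyclic. Avoidance of $132$ and $213$ then comes for free, since every $\kappa^{-1}$ of a composition lies in $\S_n(132,213)$ by construction.

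Let $\pi=\kappa^{-1}(a_1,\dots,a_k)={\rm id}_{a_1}\ominus\cdots\ominus{\rm id}_{a_k}$, and let $r=n(n-1)\cdots 1$ be the reversal, $r(i)=n+1-i$. I will compute $r\pi r$, which sends $i\mapsto n+1-\pi(n+1-i)$. In one-line notation this is the reverse-complement of $\pi$. Reversing the one-line notation of $\pi$ gives decreasing runs of lengths $a_k,\dots,a_1$, arranged in increasing blocks. Complementing turns each decreasing run into an increasing one and makes the blocks decreasing. So the reverse-complement of ${\rm id}_{a_1}\ominus\cdots\ominus{\rm id}_{a_k}$ is ${\rm id}_{a_k}\ominus\cdots\ominus{\rm id}_{a_1}$, that is, $\kappa^{-1}(a_k,\dots,a_1)=r\pi r^{-1}$ (note $r=r^{-1}$).

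I will verify this index computation directly. Position $i$ of $\pi$ lies in block $j$, meaning $s_{j-1}<i\le s_j$ where $s_j=a_1+\cdots+a_j$. There $\pi(i)=n-s_j+(i-s_{j-1})$. Substituting $n+1-i$ for $i$ and complementing shows that $r\pi r$ is increasing on the reversed blocks, with the blocks themselves in decreasing order of values. This is routine bookkeeping.

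Since conjugation preserves cycle type, $\pi$ is cyclic if and only if $r\pi r$ is cyclic. Hence $\kappa^{-1}(a_1,\dots,a_k)\in\C_n(132,213)$ if and only if $\kappa^{-1}(a_k,\dots,a_1)\in\C_n(132,213)$.

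The only step needing care is the identification of the reverse-complement with the reversed composition. If the index bookkeeping became messy, I would fall back on the symmetry argument: the class $\S_n(132,213)$ is closed under reverse-complement, because $132$ and $213$ are reverse-complements of each other. The layer lengths read in reverse order then give the composition of the image directly.
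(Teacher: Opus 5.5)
Your proof is correct, but it uses a different symmetry from the paper. The paper inverts: it asserts $\kappa(\pi^{-1})=(a_k,\dots,a_1)$, notes that the inverse of an $n$-cycle is an $n$-cycle, and observes that $132$ and $213$ are involutions, so avoidance is preserved. You instead conjugate by the reversal $r$ (i.e.\ take the reverse-complement) and use that conjugation preserves cycle type. You also correctly note that avoidance needs no argument at all, since $\kappa^{-1}$ of any composition lies in $\S_n(132,213)$ by construction; for the same reason, the paper's remark about self-inverse patterns is not strictly needed.

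What your version gains is that the key identity $r\pi r=\kappa^{-1}(a_k,\dots,a_1)$ is actually checked. Your block formula $\pi(i)=n-s_j+(i-s_{j-1})$ gives $r\pi r(i')=s_j+s_{j-1}-n+i'$ for $i'=n+1-i$ with $s_{j-1}<i\le s_j$, and this is exactly the value of $\kappa^{-1}(a_k,\dots,a_1)$ at $i'$. The paper states the analogous identity for $\pi^{-1}$ without verification, so its version is shorter but less complete.

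In fact the two maps coincide on this class. Since $\kappa$ is a bijection and both $\pi^{-1}$ and $r\pi r$ have composition $(a_k,\dots,a_1)$, we get $\pi^{-1}=r\pi r$. Equivalently, the complement $r\pi$, which is a direct sum of decreasing blocks, is an involution. So the two proofs apply the same permutation and differ only in which cycle-type-preserving operation they recognize it as.
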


\begin{proof}
    If $\kappa(\pi) = (a_1,a_2,\dots,a_k)$, then $\kappa(\pi^{-1}) = (a_k,\dots,a_2,a_1)$. The inverse of a cyclic permutation is cyclic and $132$ and $213$ are both self inverse, so taking inverses preserves $132$ and $213$ avoidance.
\end{proof}

\section{Reduction Process} \label{sec: reduction}

In this section we prove a new reduction process to check when elements of $\S_n(132,213)$ are cyclic. For the following result, we will always assume that in $\kappa(\pi) = (a_1,a_2,\dots,a_k)$ we have $a_1 > a_k$. We can do this by Proposition \ref{prop: reverse} because one of $\kappa(\pi)$ or $\kappa(\pi^{-1})$ has this property and $\kappa(\pi) \in \C_n(132,213)$ if and only if $\kappa(\pi^{-1}) \in \C_n(132,213)$. First we need one more proposition,

\begin{proposition} \label{prop: unequal-ends}
    For $n > 2$, if $\pi \in \C_n(132,213)$ with $\kappa(\pi) = (a_1,\dots,a_k)$, then $a_1 \not= a_k$.
\end{proposition}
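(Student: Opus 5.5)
The plan is to split on the length $k$ of the composition $\kappa(\pi)=(a_1,\dots,a_k)$, assume $a_1=a_k=a$, and show in each case that $\pi$ has more than one cycle. Most of the work is done by Lemma \ref{lem: balance}; only one boundary case needs a direct check.

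\emph{The case $k=1$.} Here $\pi={\rm id}_n$. For $n>2$ it has $n$ fixed points, so it is not cyclic.

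\emph{The case $k\ge 2$.} Both $a_1$ and $a_k$ are parts of the composition, so $2a=a_1+a_k\le n$, giving $a\le n/2$. Two partial sums are always present: $a_1=a$, and $a_1+\cdots+a_{k-1}=n-a_k=n-a$. Both lie strictly between $0$ and $n$, so both belong to $S_{\kappa(\pi)}$.

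\emph{Subcase $a<n/2$.} Then $a\le \lfloor (n-1)/2\rfloor$. Taking $i=a$, both $i$ and $n-i$ lie in $S_{\kappa(\pi)}$, and Lemma \ref{lem: balance} says $\pi$ cannot be cyclic.

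\emph{Subcase $a=n/2$.} This is the only point the lemma does not cover, since there $i=n-i$. In this subcase $a_1+a_k=n$ forces $k=2$, so $\pi={\rm id}_a\ominus{\rm id}_a$. Then $\pi(j)=j+a$ and $\pi(j+a)=j$ for $1\le j\le a$. So $\pi$ is an involution made of the $a$ transpositions $(j,j+a)$. Since $n>2$ we have $a\ge 2$, so $\pi$ has at least two cycles and is not cyclic.

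The argument is short, and the only step needing care is this $a=n/2$ case. I would handle it by the explicit description above rather than by the lemma.
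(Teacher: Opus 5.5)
Your proof is correct and follows essentially the same route as the paper. When $a<n/2$ you apply Lemma \ref{lem: balance} with $i=a$; when $a=n/2$ you observe that $k=2$ is forced and that ${\rm id}_{n/2}\ominus{\rm id}_{n/2}$ is a product of $n/2\ge 2$ transpositions. Your separate treatment of $k=1$, where $\pi={\rm id}_n$, is a small improvement: the paper's proof tacitly assumes $a_1$ is a proper partial sum, which excludes that case without saying so.
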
 

\begin{proof}
    Suppose $a_1 = a_k = i$, then both $i$ and $n-i$ are in $S_{\kappa(\pi)}$, and Lemma \ref{lem: balance} rules this out whenever $i \le \lfloor (n-1)/2 \rfloor$. Otherwise $2i \ge n$, and since $i$ is a proper partial sum this forces $k = 2$ with $a_1 = a_2 = n/2$. But then $\pi = {\rm id}_{n/2} \ominus {\rm id}_{n/2}$, which is a product of $n/2$ transpositions and so isn't cyclic for $n > 2$.
\end{proof}

We are now ready to state our main reduction result,

\begin{theorem} \label{thm: reduction}
    For $n \geq 3$ and $k \ge 2$, $\pi \in \C_{n}(132,213)$ where $\kappa(\pi) = (a_1,a_2,\dots,a_k)$ if and only if $r(\pi) \in \C_{m}(132,213)$ where 
    \[
    \kappa(r(\pi)) =
    \begin{cases}
        (a_k,a_2,\dots,a_{k-1},2a_k-a_1), & 2a_k > a_1 > a_k\\
        (a_1 - 2a_k, a_{k}, a_2,\dots,a_{k-1}), & a_1 > 2a_k\\
        (a_k,a_2,\dots,a_{k-1}), & a_1 = 2a_k\\
    \end{cases}
    \]
    where in the first and last case $m = n-2(a_1-a_k)$, in the second case $m = n-2a_k$, so $m$ and $n$ have the same parity.
\end{theorem}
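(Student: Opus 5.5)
The plan is to realize $r(\pi)$ as the \emph{first-return (induced) map} of $\pi$ on a suitable subset $X\subseteq[n]$, relabelled order-preservingly onto $[m]$. The tool is the following elementary fact. Let $Y=[n]\setminus X$, and suppose $\pi$ has no cycle contained entirely in $Y$. Then the cycles of $\pi$ correspond bijectively to the cycles of the induced map $\pi_X$, obtained by following $\pi$ from $x\in X$ until the orbit first re-enters $X$. In particular, $\pi$ is cyclic if and only if $\pi_X$ is cyclic. Every permutation in $\S(132,213)$ is $\kappa^{-1}$ of some composition, and the reduction to $a_1>a_k$ is justified by Proposition~\ref{prop: reverse} and Proposition~\ref{prop: unequal-ends}. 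Given all this, it suffices to exhibit $Y$ with $|Y|=n-m$, check that no cycle of $\pi$ lies in $Y$, and check that the relabelled $\pi_X$ is $\kappa^{-1}$ of the claimed composition.

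Write $a=a_1>b=a_k$. In one-line notation the first block acts by $i\mapsto i+(n-a)$ for $i\le a$. The last block acts by $n-b+j\mapsto j$ for $j\le b$. Each middle block is a translation onto an interval of values lying strictly between those hit by the first and last blocks. Since $b<a$, the values $1,\dots,b$ produced by the last block all lie in the first block. Hence for $j\le b$ we get $\pi^2(n-b+j)=n-a+j$. The key computation is to track where $n-a+j$ lands: in the last block exactly when $j\ge a-b+1$, and otherwise in the first block or a middle block. This is what splits the proof into the three cases.

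\emph{Case $a>2b$.} I take $Y=\{1,\dots,b\}\cup\{n-b+1,\dots,n\}$, so $m=n-2b$. Points of $Y$ are mapped out of $Y$ after at most two steps: $n-b+j\mapsto j\mapsto n-a+j$, and $n-a+j\le n-a+b<n-b+1$. Hence no cycle lies in $Y$. The induced map sends $a-b+j\mapsto n-a+j$ for $j\le b$. Together with the untouched translations, this should read, after relabelling, as the layered permutation with composition $(a-2b,\,b,\,a_2,\dots,a_{k-1})$. Concretely, the first $a-2b$ surviving positions of the old first block keep their translation, the next $b$ are rerouted to the bottom of the remaining first-block image, and the middle blocks shift down uniformly.

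\emph{Case $2b>a>b$.} Here I would remove $Y$ of size $2(a-b)$: the initial positions $\{1,\dots,a-b\}$ and the values $\{n-a+1,\dots,n-b\}$ that the first block sends below the last block. I expect to verify in the same way that the rerouted points form a new first block of size $b$ and a new last block of size $2b-a$.

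\emph{Case $a=2b$.} This is the degenerate version of either construction, in which the leftover block has size $0$ and simply disappears.

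The main obstacle is the bookkeeping: choosing $Y$ precisely in each case so that the relabelled $\pi_X$ is exactly a skew sum of identities with the stated composition, and confirming that $Y$ contains no full cycle. For the case $2b>a$, my first attempt is to iterate the case-$a>2b$ computation in the "overlap" regime. There $\pi^2$ acts as translation down by $a-b$ on $\{a-b+1,\dots,b\}$, which suggests removing one layer of length $a-b$ from each end. If that choice of $Y$ fails, I would adjust it by testing small examples such as $\kappa^{-1}(3,1,2)$.
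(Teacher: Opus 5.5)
Your approach is correct and is essentially the paper's. The paper's ``delete two blocks and splice the cycle back together'' is exactly your first-return map $\pi_X$. For $a>2b$, your $Y=\{1,\dots,b\}\cup\{n-b+1,\dots,n\}$ with the rerouting $a-b+j\mapsto n-a+j$ is precisely the paper's case $B$. Stating the first-return lemma explicitly, with the check that no cycle lies in $Y$, is a real gain in rigor: the paper only asserts that the spliced permutation is still cyclic.

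You diverge from the paper in the case $2b>a>b$, and there your write-up is unfinished. The paper removes the tails $\{b+1,\dots,a\}\cup\{n-(a-b)+1,\dots,n\}$ of the two end blocks. Your $Y=\{1,\dots,a-b\}\cup\{n-a+1,\dots,n-b\}$ is the mirror image of that set under $i\mapsto n+1-i$, which conjugates every $\pi\in\S_n(132,213)$ to $\pi^{-1}$. Your $Y$ does work, so no fallback is needed. However, your predicted mechanism is wrong: the rerouted points do not form the new end blocks.

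Here is how the check goes. Put $d=a-b<b$. We have $\pi^{-1}(Y)=\{1,\dots,d\}\cup\{n-b+1,\dots,n-b+d\}$, and $d<n-a+j\le n-b$ for $1\le j\le d$. Hence every point of $Y$ leaves $Y$ within two steps, and the only rerouting is $\pi_X(n-b+j)=\pi(n-a+j)$.

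Now let $Q=\{1,\dots,d\}\cup\{n-b+1,\dots,n-b+d\}$. The order-preserving bijection $\lambda\colon[n]\setminus Q\to X$ shifts $\{n-a+1,\dots,n-b\}$ up by $d$ and fixes everything else, and one checks that $\pi_X\circ\lambda=\pi$ on $[n]\setminus Q$. So the relabelled $\pi_X$ is just the standardization of the one-line notation of $\pi$ with two pieces deleted: the first $d$ entries of the first block and the first $d$ entries of the last block. That permutation is visibly layered with composition $(b,a_2,\dots,a_{k-1},2b-a)$. When $a=2b$ the whole last run disappears, giving $(b,a_2,\dots,a_{k-1})$. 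The paper's choice of $Y$ instead deletes the last $d$ entries of each end block.

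Finally, drop the appeal to Proposition~\ref{prop: unequal-ends}. All three cases in the statement already have $a_1>a_k$. Moreover, that proposition assumes $\pi$ is cyclic, so it cannot be used in the ``if'' direction.
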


\begin{proof}
    Suppose $\kappa(\pi) = (a_1,a_2,\dots,a_k)$ with $a_1 > a_k$, which we may assume without loss by Proposition \ref{prop: reverse}. In each case below we delete two blocks of entries and splice the remaining cycle back together. The deleted blocks are determined by the resulting composition, so each surgery is reversible and the argument establishes both directions at once. For this reason we phrase everything in the forward direction, starting from $\pi \in \C_n(132,213)$.

    First, notice generally we can remove at most $\min(a_k,a_1-a_k)$ elements from the first and last increasing sequence in our permutation. Indeed, if $2a_k > a_1$, then $a_1 - a_k < a_k$, so we can only remove $a_1-a_k$ elements from the first increasing sequence before we reach position $a_k$. If $a_1 > 2a_k$, we cannot remove more than the entire final increasing sequence, so in this case we remove $a_k$ elements. In other words, the first and last case, which we will call case $A$ and case $C$, remove $a_1 - a_k$ elements from each end, while the second case, which we call case $B$, removes $a_k$ from each end.

    \medskip
    \noindent\textbf{Case $A$: $2a_k > a_1 > a_k$.} The beginning and end of the permutation look like
    \[
    \begin{gathered}
    \underset{1}{\underline{(n-a_1+1)}} \cdots \overbracket{\underset{2a_k - a_1 + 1}{\underline{(n-2(a_1-a_k) + 1)}}\cdots\underset{a_k}{\underline{(n-a_1+a_k)}}}^{B_1: \; a_1-a_k \; \text{elements}}\; \overbracket{\underset{a_k+1}{\underline{(n-a_1+a_k+1)}}\cdots \underset{a_1-1}{\underline{(n-1)}} \;\underset{a_1}{\underline{(n)}}}^{B_2: \; a_1-a_k \; \text{elements}} \\  |12 \cdots \underset{n-a_1+a_k}{\underline{(2a_k-a_1)}} \; \underbracket{\underset{n-a_1+a_k+1}{\underline{(2a_k-a_1+1)}}\cdots \underset{n}{\underline{(a_k)}}}_{B_3: \; a_1-a_k \; \text{elements}}
    \end{gathered}
    \]
    Notice this makes sense because $2a_k > a_1$, so that $a_1 - 2(a_1-a_k) = 2a_k - a_1 > 0$, and there are elements to the left of position $2a_k - a_1 + 1$.

    The elements of $B_2$ sit in positions $a_k+1,\dots,a_1$ and map to entries at the end of the permutation. Writing $j$ for whichever element maps to $a_1 - i$, we have
    \[
    j \to (a_1 - i) \to (n-i) \to (a_k-i) \to n- (a_1-a_k) -i
    \]
    for $0 \leq i \leq a_1 - a_k - 1$. In other words, each element of $B_2$ maps to something in $B_3$, which then maps back into $B_1$, whose elements are $n-2(a_1-a_k) + 1,\dots,n-a_1+a_k$. These in turn map to the elements immediately to the left of $B_3$.

    So when we remove all the elements of $B_2$ and $B_3$ and shift every remaining element larger than $a_k$ down by $a_1-a_k$, the resulting permutation is still cyclic. Indeed, we are only removing the $(a_1 - i) \to (n-i)$ portion of each chain and instead mapping $j$ directly to $n-a_1 + a_k - i$. Since we have removed $a_1-a_k$ elements from the first and last increasing sequences and left the interior parts alone, the resulting composition is $(a_k,a_2,\dots,a_{k-1},2a_k-a_1)$ as desired.

    \medskip
    \noindent\textbf{Case $C$: $a_1 = 2a_k$.} This is a degeneration of case $A$, so we treat it here. Now $a_1 - a_k = a_k$, so $B_2$ and $B_3$ together account for the entire final increasing run. Therefore, removing them deletes all of $\{1,\dots,a_k\}$ along with the top $a_k$ values. We still have a cyclic permutation so long as the original permutation was cyclic. The only difference here is that $B_3$ now exhausts the last part of the composition rather than truncating it. As a result, the composition loses its last part entirely and the result is $(a_k, a_2,\dots,a_{k-1})$, of size $n - 2a_k = n - 2(a_1-a_k)$.

    \medskip
    \noindent\textbf{Case $B$: $a_1 > 2a_k$.} Recall that we will be removing $a_k$ elements, but now there is a gap between the elements in positions $1,\dots,a_k$ and those in positions $a_1 - a_k + 1,\dots,a_1$. So now our permutation looks like
    \[
     \begin{gathered}
    \overbracket{\underset{1}{\underline{(n-a_1+1)}} \cdots \underset{a_k}{\underline{(n-a_1 + a_k)}}}^{B_1: \; a_k \; \text{elements}}  \overbracket{ \underset{a_k+1}{\underline{(n-a_1+a_k+1)}} \cdots\underset{a_1-a_k}{\underline{(n-a_k)}}}^{B_2: \; a_1-2a_k \; \text{elements}} \overbracket{\underset{a_1-a_k +1}{\underline{(n-a_k+1)}}\cdots \underset{a_1-1}{\underline{(n-1)}} \;\underset{a_1}{\underline{(n)}}}^{B_3: \; a_k \; \text{elements}} \\ 
     \; | \;\underset{n-a_k}{\underline{\pi_{n-a_k}}} \;\underbracket{\underset{n-a_k+1}{\underline{(1)}} \cdots \underset{n}{\underline{(a_k)}}}_{B_4: \; a_k \; \text{elements}}
    \end{gathered}
    \]
    Tracking the elements of the cycle we find the same chain $(a_1-i) \to (n-i) \to (a_k-i) \to n-a_1 + a_k - i$ and perform the same replacement. The only difference now is that the $a_1-2a_k$ elements of $B_2$ get skipped over. We remove $B_3$ and $B_4$, replace the elements of $B_3$ by those of $B_1$, and leave $B_2$ untouched. To compensate for removing $1,\dots,a_k$ we shift all the elements down by $a_k$. The resulting permutation is still cyclic.

    It remains to check that the result still avoids $132$ and $213$, since here the relative order has changed. Every element of $B_1$ is smaller than every element of $B_2$, and every element of $B_1$ is larger than everything to the right of $B_3$, the block it replaces. So the new permutation is still a skew sum of increasing runs, and it has the form $(a_1-2a_k,a_k,a_2,\dots,a_{k-1})$ as desired.
\end{proof}

\begin{remark}
    In the statement above we use the convention that a segment $a_2,\dots,a_{k-1}$ is empty when $k = 2$, and consists of the single entry $a_2$ when $k = 3$. Thus for $k = 2$ the first case reads $(a_2, 2a_2 - a_1)$, the second reads $(a_1 - 2a_2, a_2)$, and the third reads $(a_2)$.
\end{remark}

    In the rest of the paper, we continue to use $A$, $B$ and $C$ to refer to these three cases, and if $a_1 < a_k$ we call this case $R$, for reverse. The easiest way to understand this cycle surgery is with an example.
    
\begin{example}
     First, let's consider the permutation with composition $\kappa(\pi) = (6,1,4)$ this corresponds to the permutation $6789(10)(11)51234$ which is cyclic. Notice that $a_1 = 6$ and $a_3 = 4$. We want to remove the final $a_1 - a_3 = 2$ elements of the first and last increasing sequence, then shift all the elements larger than $4$ down by $2$ to compensate for removing $3$ and $4$. Considering the entire cycle, we will highlight the parts we will remove
    \[
    1 \to {\color{red} 6 \to 11} \to 4 \to 9 \to 2 \to 7 \to {\color{red} 5 \to 10} \to 3 \to 8 \to 1
    \]
    when we remove the red parts and shift everything larger than $4$ down by $2$ we find,
    \[
    1 \to 4 \to 7 \to 2 \to 5 \to 3 \to 6 \to 1
    \]
    which corresponds to the permutation $4567312$.
\end{example}

Now let's do an example to illustrate how this reduction process works at the composition level,

\begin{example}
    We'll do a few examples to see how this works. Consider $\kappa(\pi) = (3,2,1,2,2)$, this $\pi$ is balanced. $\kappa(r(\pi)) = (2,2,1,2,1)$, $\kappa(r^2(\pi)) = (1,2,1,2)$. We then have to reverse to apply $r$ again, $\kappa(r(2,1,2,1)) = (1,1,2)$. We have to reverse again, $\kappa(r(2,1,1)) = (1,1)$. The reduction now terminates and we conclude that $\pi$ is cyclic.

    If we consider an unbalanced permutation, say $\kappa(\pi) = (3,2,1,4,1)$, we are first in case $B$, reducing to $(1,1,2,1,4)$, we reverse to $(4,1,2,1,1)$ then again are in case $B$ and reduce to $(2,1,1,2,1)$, this is case $C$ which reduces to $(1,1,1,2)$, we reflect and get $(2,1,1,1)$, this is again case $C$ which reduces to $(1,1,1)$ since we hit a point where $a_1 = a_3$ we conclude that the original permutation is not cyclic. We could have concluded this originally because $6$ is clearly a fixed point for this permutation.

    Now let's consider a cyclic odd permutation $\kappa(\pi) = (2,4,3)$. First we need to reverse to get $(3,4,2)$. We are then in case $A$, and we reduce to $(2,4,1)$. This lands us in case $C$, which reduces us to $(1,4)$. We then have to reverse again to get $(4,1)$. This is case $B$, which reduces us to $(2,1)$. We are then in case $C$ and reduce to $(1)$.  The pictures of this reduction can be seen in Figure \ref{fig: reduce}.
\end{example}
\vspace{-.5em}
\begin{figure}[H]
\centering
\newcommand\myx[1][(0,0)]{\pic at #1 {myx};}
\tikzset{myx/.pic = {\draw
(-2.5mm,-2.5mm) -- (2.5mm,2.5mm)
(-2.5mm,2.5mm) -- (2.5mm,-2.5mm);}}
\newcommand\redmove[1]{$\vcenter{\hbox{$\;\xrightarrow{\ #1\ }\;$}}$}

\begin{tabular}{@{}c@{}c@{}c@{}c@{}c@{}c@{}c@{}}
\begin{tabular}{c}
$7\,8\,9\,3\,4\,5\,6\,1\,2$\\
\begin{tikzpicture}[scale=.38]
\draw[gray] (0,0) grid (9,9);
\foreach \x/\y in {1/7,2/8,3/9,4/3,5/4,6/5,7/6,8/1,9/2}
  \myx[(\x-.5,\y-.5)];
\draw[-,ultra thick,green!70!black]
 (0.5,0.5)--(0.5,6.5)--(6.5,6.5)--(6.5,4.5)--(4.5,4.5)--(4.5,3.5)
 --(3.5,3.5)--(3.5,2.5)--(2.5,2.5);
\draw[-,ultra thick,red!70!black]
 (2.5,2.5)--(2.5,8.5)--(8.5,8.5)--(8.5,1.5)--(1.5,1.5);
\draw[-,ultra thick,green!70!black]
 (1.5,1.5)--(1.5,7.5)--(7.5,7.5)--(7.5,0.5)--(0.5,0.5);
\draw[-,ultra thick,green!70!black,dashed]
 (2.5,2.5) to[bend left=35] (1.5,1.5);
\end{tikzpicture}\\
$(3,4,2)$
\end{tabular}
&
\redmove{A}
&
\begin{tabular}{c}
$6\,7\,2\,3\,4\,5\,1$\\
\begin{tikzpicture}[scale=.38]
\draw[gray] (0,0) grid (7,7);
\foreach \x/\y in {1/6,2/7,3/2,4/3,5/4,6/5,7/1}
  \myx[(\x-.5,\y-.5)];
\draw[-,ultra thick,green!70!black]
 (0.5,0.5)--(0.5,5.5)--(5.5,5.5)--(5.5,3.5)--(3.5,3.5)--(3.5,2.5)
 --(2.5,2.5)--(2.5,1.5)--(1.5,1.5);
\draw[-,ultra thick,red!70!black]
 (1.5,1.5)--(1.5,6.5)--(6.5,6.5)--(6.5,0.5)--(0.5,0.5);
\draw[-,ultra thick,green!70!black,dashed]
 (1.5,1.5) to[bend left=35] (0.5,0.5);
\end{tikzpicture}\\
$(2,4,1)$
\end{tabular}
&
\redmove{C}
&
\begin{tabular}{c}
$2\,3\,4\,5\,1$\\
\begin{tikzpicture}[scale=.38]
\draw[gray] (0,0) grid (5,5);
\foreach \x/\y in {1/2,2/3,3/4,4/5,5/1}
  \myx[(\x-.5,\y-.5)];
\draw[-,ultra thick,green!70!black]
 (0.5,0.5)--(0.5,1.5)--(1.5,1.5)--(1.5,2.5)--(2.5,2.5)--(2.5,3.5)--(3.5,3.5);
\draw[-,ultra thick,red!70!black]
 (3.5,3.5)--(3.5,4.5)--(4.5,4.5)--(4.5,0.5)--(0.5,0.5);
\draw[-,ultra thick,green!70!black,dashed]
 (3.5,3.5) to[bend left=25] (0.5,0.5);
\end{tikzpicture}\\
$(4,1)$
\end{tabular}
&
\redmove{B}
&
\begin{tabular}{c}
$2\,3\,1$\\
\begin{tikzpicture}[scale=.38]
\draw[gray] (0,0) grid (3,3);
\foreach \x/\y in {1/2,2/3,3/1}
  \myx[(\x-.5,\y-.5)];
\draw[-,ultra thick,green!70!black]
 (0.5,0.5)--(0.5,1.5)--(1.5,1.5);
\draw[-,ultra thick,red!70!black]
 (1.5,1.5)--(1.5,2.5)--(2.5,2.5)--(2.5,0.5)--(0.5,0.5);
\draw[-,ultra thick,green!70!black,dashed]
 (1.5,1.5) to[bend left=35] (0.5,0.5);
\end{tikzpicture}\\
$(2,1)$
\end{tabular}
\end{tabular}

\caption{The reduction of $\kappa^{-1}(2,4,3) = 894567123$. Each panel is drawn after reversing when necessary so that $a_1 > a_k$. This means the first and third panels show the inverse of the permutation produced by the previous step. Red marks the portion of the cycle removed by the indicated move, and the dashed green arc is the new edge that replaces it; after deleting, the surviving entries are relabelled. A final move $C$ applied to $(2,1)$ leaves $(1)$, so $\pi$ is cyclic.}
\label{fig: reduce}
\end{figure}

The previous example illustrated our new algorithm for verifying when a permutation $\pi \in \S_n(132,213)$ is cyclic,

\begin{algorithm} \label{alg: reduce}
    Given a permutation $\pi \in \S_n(132,213)$, repeatedly apply $r$, reversing when necessary to obtain $a_1 > a_k$, to $\pi$ until the corresponding composition has $a_1 = a_k$ or is just $a_1$. If this stopped state is either $(1,1)$ or $(1)$ the permutation is cyclic, if not it is not cyclic.
\end{algorithm}

Notice this recovers Huang's result without the need to balance the permutation. For sake of completion, we include this new proof,

\begin{corollary}\cite[Proposition 4.2]{H19}
    If $\pi \in \C_n(132,213)$, $\kappa(\pi)$ has exactly one odd part if $n$ is odd and exactly two odd parts if $n$ is even.    
\end{corollary}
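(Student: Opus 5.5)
Our approach is induction on $n$ along the reduction of Theorem \ref{thm: reduction}, tracking how many parts of the composition are odd. The invariant to verify is that one application of $r$, possibly preceded by a reversal, preserves the number of odd parts. Reversal (Proposition \ref{prop: reverse}) only reorders the parts, so it trivially preserves this count. It therefore suffices to compare the multisets of parity classes of $\kappa(\pi)$ and $\kappa(r(\pi))$ in each case $A$, $B$, $C$, assuming $a_1>a_k$.

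\textbf{Case $A$.} The parts $a_1, a_k$ are replaced by $a_k, 2a_k-a_1$, and the interior parts are untouched. Since $2a_k-a_1\equiv a_1 \pmod 2$, the new pair $\{a_k,\,2a_k-a_1\}$ has the same parities as the old pair $\{a_1,a_k\}$. Hence the number of odd parts is unchanged.

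\textbf{Case $B$.} Here $a_1,a_k$ become $a_1-2a_k,\,a_k$, and again $a_1-2a_k\equiv a_1$. So the count is unchanged.

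\textbf{Case $C$.} Here $a_1=2a_k$ is even and the two end parts $a_1,a_k$ become the single part $a_k$. We lose one even part and keep $a_k$, so the number of odd parts is again unchanged.

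The step needing care is that $k\ge 2$ and $n\ge 3$ hold at each stage, so that Theorem \ref{thm: reduction} applies and the procedure terminates. Each step strictly decreases $n$ while preserving parity and cyclicity. By Proposition \ref{prop: unequal-ends}, a cyclic permutation of size greater than $2$ never has $a_1=a_k$, so a reduction always applies. Also, if $k=1$ and $n\ge 2$ the permutation is the identity, which is not cyclic. So the process continues until $n\le 2$ and reaches a cyclic element of $\S_1$ or $\S_2$ in $\S(132,213)$, namely $(1)$ or $(1,1)$, as in Algorithm \ref{alg: reduce}. This matches the paper's claim that every cyclic element reduces to $1$ for odd $n$ or $21$ for even $n$.

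Since the size parity is preserved, odd $n$ ends at $(1)$, which has one odd part, and even $n$ ends at $(1,1)$, which has two odd parts. Pulling back along the invariant gives the statement.
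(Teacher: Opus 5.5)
Your proof is correct and takes the same route as the paper: the reduction $r$ (with reversals as needed) preserves cyclicity, the parity of the size, and the number of odd parts, and it terminates at $(1)$ for odd $n$ or $(1,1)$ for even $n$. You additionally spell out two things the paper's short proof leaves implicit: the case-by-case parity check for $A$, $B$, $C$, and the termination argument via Proposition \ref{prop: unequal-ends} together with the non-cyclicity of the identity.
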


\begin{proof}
By Theorem \ref{thm: reduction}, $\pi \in \C_n(132,213)$ if and only if $r(\pi)$ is as well. Applying Algorithm \ref{alg: reduce}, even permutations will terminate in $(1,1)$, while odd permutations terminate in $(1)$. Since our reduction always preserves the number of odd parts in the composition, even permutations have exactly two odd parts, while odd permutations have one.
\end{proof}

This new perspective has two main benefits:  it allows us to more easily work with all permutations $\pi \in \S_n(132,213)$, and it makes it easier to build up cyclic permutations from $(1,1)$ or $(1)$ via moves we describe now:

\begin{lemma} \label{lem: moves}
    Let $\pi \in \C_n(132,213)$ with $\kappa(\pi) = (a_1,a_2,\dots,a_k)$,
    \begin{enumerate}
        \item If the active boundary condition is $a_1|a_k$, subject to $a_1 > a_k \geq 1$, let $\Phi_A: a_1|a_k \to (2a_1-a_k)|a_1$. Then $\Phi_A(\pi) \in \C_{n+2(a_1-a_k)}(132,213)$.
        \item If the active boundary condition is $a_1,a_2|$ where $a_1,a_2 \geq 1$, let $\Phi_B: a_1,a_2| \to (a_1+2a_2)|a_2$. Then $\Phi_B(\pi) \in \C_{n+2a_2}(132,213)$.
        \item If the active boundary condition is $a_1|$, where $a_1 \geq 1$, let $\Phi_C: a_1| \to 2a_1|a_1$. Then $\Phi_C(\pi) \in \C_{n+2a_1}(132,213)$.
    \end{enumerate}
    In the case the permutation is balanced, the resulting permutation is balanced so long as when applying $\Phi_B$ we have $a_1 \not= n/2$. Furthermore, each of $\Phi_A, \Phi_B$ and $\Phi_C$ are injections.
\end{lemma}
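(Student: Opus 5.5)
The plan is to show that each of $\Phi_A,\Phi_B,\Phi_C$ is a right inverse of the reduction $r$ of Theorem \ref{thm: reduction}, landing in the appropriate case $A$, $B$, or $C$. Cyclicity of the output then follows from the ``if'' direction of Theorem \ref{thm: reduction}. Injectivity follows because $r$ recovers the input.

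\textbf{Cyclicity.} I treat the three moves in turn.

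For $\Phi_A$, write $\sigma=\Phi_A(\pi)$, so $\kappa(\sigma)=(2a_1-a_k,a_2,\dots,a_{k-1},a_1)$. Here $k\ge 2$, since the boundary condition $a_1|a_k$ with $a_1>a_k$ needs two parts. Set $b_1=2a_1-a_k$ and $b_k=a_1$. Then $b_1>b_k$ because $a_1>a_k$, and $2b_k=2a_1>b_1$ because $a_k\ge 1$. So $\sigma$ is in case $A$, and $r(\sigma)$ has composition $(b_k,a_2,\dots,a_{k-1},2b_k-b_1)=(a_1,a_2,\dots,a_{k-1},a_k)=\kappa(\pi)$. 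The size is $n+2(b_1-b_k)-\ldots$; more precisely, the new size is $n+2(a_1-a_k)$ and $r$ subtracts $2(b_1-b_k)=2(a_1-a_k)$, which is consistent. Theorem \ref{thm: reduction} then gives $\sigma\in\C_{n+2(a_1-a_k)}(132,213)$.

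For $\Phi_B$, the composition becomes $(a_1+2a_2,a_3,\dots,a_k,a_2)$. I read ``$(a_1+2a_2)|a_2$'' as: the second part moves to the end, matching case $B$'s output $(a_1-2a_k,a_k,a_2,\dots)$. Here $b_1=a_1+2a_2>2a_2=2b_k$, so $\sigma$ is in case $B$. Then $r(\sigma)=(b_1-2b_k,b_k,a_3,\dots,a_k)=\kappa(\pi)$, of size $n+2a_2-2a_2=n$.

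For $\Phi_C$, the composition becomes $(2a_1,a_2,\dots,a_k,a_1)$. Since $b_1=2b_k$, this is case $C$, and $r$ returns $(a_1,a_2,\dots,a_k)$.

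In each case the new composition has at least two parts and size at least $3$, as Theorem \ref{thm: reduction} requires. I also need $b_1\neq b_k$, which holds above, so no reversal is needed.

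\textbf{Injectivity.} This is immediate: $r\circ\Phi_X=\mathrm{id}$ on the domain of $\Phi_X$. Alternatively, each $\Phi_X$ is given by an explicit invertible rule on compositions.

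\textbf{Balancedness.} This is the step needing the most care. Suppose $\pi$ is balanced, so some proper partial sum equals $n/2$.

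Under $\Phi_A$ the new total is $N=n+2(a_1-a_k)$. Every partial sum through $a_{j}$ with $j<k$ increases by exactly $(2a_1-a_k)-a_1=a_1-a_k=(N-n)/2$. So the partial sum that equaled $n/2$ becomes $N/2$. That partial sum cannot be the one ending at $a_{k-1}$ only if…; in fact any $j\le k-1$ works, since the last part is unchanged in the index shift.

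Under $\Phi_C$ the same argument applies, with shift $a_1=(N-n)/2$ on all old partial sums.

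Under $\Phi_B$ the first two parts merge into $a_1+2a_2$ and $a_2$ moves to the end. Old partial sums $s_j$ for $j\ge2$ become $s_j+a_2=s_j+(N-n)/2$, so they are preserved as halves. The lone exception is $s_1=a_1$, which no longer appears as a partial sum. This is exactly why the hypothesis $a_1\neq n/2$ is needed.

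The main obstacle is only bookkeeping: the ordering convention in $\Phi_B$, and confirming the boundary hypotheses select the intended case of Theorem \ref{thm: reduction} with $k\ge2$.
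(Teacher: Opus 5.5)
Your proposal is correct and follows essentially the same route as the paper. Each $\Phi_X$ satisfies $r\circ\Phi_X=\mathrm{id}$ with its output landing in case $A$, $B$, or $C$ of Theorem \ref{thm: reduction}, which gives cyclicity and injectivity at once, and balancedness follows because every surviving proper partial sum shifts by exactly half the size increase (your partial-sum bookkeeping is more explicit than the paper's); just delete the two abandoned fragments (``$n+2(b_1-b_k)-\ldots$'' and ``only if\ldots'') in the $\Phi_A$ paragraphs.
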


\begin{proof}
    First we note that $\Phi_A$ and $\Phi_C$ clearly preserve being balanced because they always add the same amount to the beginning and end of the permutation. In $\Phi_B$, so long as both $a_1$ and $a_2$ are part of the first half of the balanced composition, $\Phi_B$ adds $a_2$ to the first and second part of the composition. If $a_2$ is part of the second composition, though, the image of $\Phi_B$ is not balanced. This only happens if $a_1 = n/2$. It remains to show that they preserve being cyclic.
    
    Let $\pi' = \Phi_A(\pi)$, so that the active boundary condition is $(2a_1-a_k)|a_1$. Notice then that $r(\pi') = \pi$ because $2a_1 > 2a_1-a_k > a_1$ since $a_1>a_k$ to start. By Theorem \ref{thm: reduction} since $\pi \in \C_n(132,213)$, so is $\pi'$. Similarly, let $\pi' = \Phi_B(\pi)$, so that the active boundary condition is $(a_1+2a_2)|a_2$. Again we find that $r(\pi') = \pi$ since in this case $a_1+2a_2 > 2a_2$. By Theorem \ref{thm: reduction} since $\pi \in \C_n(132,213)$, so is $\pi'$. Finally, let $\pi' = \Phi_C(\pi)$, so that the active boundary condition is $2a_1|a_1$. In this case we fall into the third case of our reduction, so $r(\pi') = \pi$ again. By Theorem \ref{thm: reduction} since $\pi \in \C_n(132,213)$, so is $\pi'$.

    Each of these maps is injective because the inverse map on their image is just the reduction of the resulting composition.
\end{proof}

For sake of completion, we also define $\Phi_R$ as the reversal involution, where $\Phi_R(a_1,\dots,a_k) = (a_k,\dots,a_1)$. 

\begin{example}
     Let's build up some cyclic permutations. If we start with $\omega = (1,1)$, then $\Phi_{C}(\omega) = (2,1,1)$ and $\Phi_B(\omega) = (3,1)$. Notice in the latter case, even though $\omega$ is balanced, since $a_2 = 1 = 2/2$, the result is not balanced. Notice also that $\Phi_A$ doesn't apply to $\omega$ at all, since it needs $a_1 > a_k$ and here $a_1 = a_k = 1$. So not all the moves are available at every stage.

    If we continue, since $\Phi_C(\omega) = (2,1,1)$, now we can apply $\Phi_B(\Phi_C(\omega)^R) = \Phi_B(1,1,2) = (3,2,1)$. Notice that to apply $\Phi_B$, we do not need the assumption that $a_1 > a_k$. This corresponds to the permutation $456231$. Notice if instead we wanted to generate $(3,1,2)$ we would apply $\Phi_A(2,1,1) = (3,1,2)$.

    Finally, if we apply $\Phi_A(3,1,2) = (4,1,3)$. If we consider $\Phi_C(3,1,4) = (6,1,4,3)$, which correspond to the permutation $9(10)(11)(12)(13)(14)84567123 \in \C_{14}(132,213)$. Notice we began this process with $(1,1)$ so all the permutations we found also had even size.
\end{example}

Notice that only $\Phi_C$ actually increases the length of the composition. Since this reduction and building up process are inverses to one another and deterministic (i.e. there is only one allowable reduction at any given time), we can conclude that every permutation $\pi \in \C_n(132,213)$ has a \emph{signature} which consists of the moves required to reduce it to $(1)$ or $(1,1)$. Explicitly,

\begin{definition}
    Given $\pi \in \C_n(132,213)$, let the \emph{signature} $s(\pi)$ be the reduction moves, listed from left to right, required to reduce $\pi$ to $(1)$, if $n$ is odd, and $(1,1)$ if $n$ is even. This is a word on the alphabet $\{A,B,C,R\}$.
\end{definition}

Whenever we write $s(\kappa(\pi))$, this is the same as $s(\pi)$. Our convention will be that $s(1) = s(1,1) = \varnothing$. Note that with the convention $\Phi_w = \Phi_{w_1} \circ \cdots \circ \Phi_{w_m}$, the rightmost letter of $w$ is applied first. Since $s(\pi)$ lists the reduction moves from left to right and each $\Phi$ inverts the corresponding reduction, we have $\Phi_{s(\pi)}(\omega) = \pi$, where $\omega = (1)$ or $(1,1)$ according to the parity of $n$. In particular, prepending a letter to a signature corresponds to applying one further $\Phi$ move at the end of the construction.

For example, $s(2,2,4,2,1) = CRCRCRBC$, and so $\Phi_{CRCRCRBC}(1) = (2,2,4,2,1)$. Notice the signature alone does not determine the permutation uniquely; it determines two permutations, one of each parity. Once the parity is known, though, the permutation is determined, since the reduction in Theorem \ref{thm: reduction} is deterministic. This gives us a way to connect the even and odd size permutations,

 \begin{theorem} \label{thm: bijs-odd-even}
 We have mutually inverse bijections
  \[
     \bigcup_{n \ge 1} \C_{2n-1}(132,213)\mathrel{
    \substack{
    \xrightarrow{\Phi_{s(\pi)}(1,1)}\\
    \xleftarrow[\hspace{.2em} \Phi_{s(\pi)}(1) \hspace{.2em}]{}
    }
    } \bigcup_{n \ge 1} \C_{2n}(132,213).
     \]
\end{theorem}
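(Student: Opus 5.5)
The plan is to compare the two build-up processes move by move. Given $\pi \in \C_{2n-1}(132,213)$ with signature $w = s(\pi)$, we know $\Phi_w(1) = \pi$. We want to show three things:
\begin{enumerate}
\item $\Phi_w(1,1)$ is defined, meaning every move in $w$ is applicable at the moment it is used.
\item $\Phi_w(1,1)$ lies in $\C_{2m}(132,213)$ for some $m$.
\item $s(\Phi_w(1,1)) = w$.
\end{enumerate}
The symmetric statements, with the roles of $(1)$ and $(1,1)$ exchanged, give the reverse map. Once all of this is in place, the two maps are mutually inverse. Each sends a permutation to the unique permutation of the other parity with the same signature, and uniqueness holds because the reduction of Theorem \ref{thm: reduction} is deterministic.

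Membership in $\C(132,213)$ is automatic: Lemma \ref{lem: moves} shows that each applicable $\Phi$ move preserves being cyclic and avoiding $132$ and $213$, and $\Phi_R$ does too by Proposition \ref{prop: reverse}. Parity is also automatic, because every move adds an even number of entries. So the real content is that the sequence of moves recorded in $w$ is legal from both starting points, and that reducing the result reproduces $w$.

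For this I would prove, by induction on the length of the suffix of $w$ applied so far, that the two intermediate compositions $\alpha$ (built from $(1)$) and $\beta$ (built from $(1,1)$) have matching boundary data. Concretely, the invariant is:
\begin{enumerate}
\item $\alpha$ and $\beta$ have the same first part and the same last part;
\item $\beta$ is obtained from $\alpha$ by inserting one extra interior part, with the convention that when $\alpha = (x)$ has a single part, $x$ counts as both the first and the last part.
\end{enumerate}
One can see this in the small examples: $\Phi_C$ sends $(1)$ to $(2,1)$ and $(1,1)$ to $(2,1,1)$, and $\Phi_A$ then sends these to $(3,1,2)$ and $(3,1,1,2)$. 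The legality and the case of each move depend only on the comparison of $a_1$ with $a_k$ and on $a_1 = 2a_k$ versus $a_1 \gtrless 2a_k$, that is, only on the boundary parts. The invariant therefore makes the same move legal on both sides. It also forces the reduction in Theorem \ref{thm: reduction} to fall into the same case $A$, $B$, $C$ or $R$ on both sides, so the reduction of $\beta$ retraces $w$.

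The inductive step is a check of each of $\Phi_A, \Phi_B, \Phi_C, \Phi_R$ against the invariant:
\begin{enumerate}
\item $\Phi_A$ and $\Phi_C$ only alter or add boundary parts, so the extra interior part stays interior.
\item $\Phi_R$ reverses both compositions, which clearly preserves the invariant.
\item $\Phi_B$ moves the second part $a_2$ to the end. This is the delicate case, because the second part of $\beta$ may be the extra inserted part rather than the second part of $\alpha$; in the example above, $\Phi_B$ sends $(3,1,2)$ to $(5,2,1)$ but $(3,1,1,2)$ to $(5,1,2,1)$.
\end{enumerate}
I would handle $\Phi_B$ by strengthening the invariant. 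The natural candidate is that the extra part of $\beta$ is always a $1$ sitting in a specified position. Since in the odd case $\alpha$ has exactly one odd part and in the even case $\beta$ has two (Huang's parity result, reproved above), this is the right part to track. The aim is to show that it, or the second part, is exactly what $\Phi_B$ moves, in a way that keeps the boundary data equal.

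I expect the main obstacle to be this $\Phi_B$ bookkeeping, together with the very first step. At the start $\alpha = (1)$ has length one, so $\Phi_B$ is not literally defined on it, whereas $\Phi_B(1,1) = (3,1)$ is. Similarly, at the end of a reduction, $(2,1)$ goes to $(1)$ via $C$ while $(3,1)$ goes to $(1,1)$ via $B$. So I would first fix the convention for how the length-one composition $(1)$ interacts with $\Phi_B$, equivalently how the terminal reductions to $(1)$ and to $(1,1)$ correspond. Only then would I run the induction, checking the terminal cases by hand and verifying that the strengthened invariant is preserved by $\Phi_B$ in both the case where $\beta$'s second part is the inserted one and the case where it is not.
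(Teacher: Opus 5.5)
\textbf{The start-up problem is a counterexample, not a matter of convention.} Take the composition $(3,1)$, i.e.\ $2341\in\C_4(132,213)$. It has $a_1>2a_k$, so case $B$ sends it directly to $(1,1)$ and $s(3,1)=B$. No odd-size element has a signature ending in $B$. The output $(a_1-2a_k,a_k,a_2,\dots,a_{k-1})$ of case $B$ has $k\ge 2$ parts, so it is never $(1)$; equivalently, $\Phi_B(1)$ is undefined.

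So $\Phi_{s(\pi')}(1)$ is undefined for $\pi'=(3,1)$. The same holds for every even-size element whose reduction passes through $(3,1)$, for instance $(5,1)$ with signature $BB$ and $(2,3,1)$ with signature $CRB$. The map $\pi\mapsto\Phi_{s(\pi)}(1,1)$ misses all of them, so the statement as written is false. The paper's short argument has the same hole, since it takes for granted that both maps are defined.

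What is true is this: $\pi\mapsto\Phi_{s(\pi)}(1,1)$ is a bijection from the odd-size elements onto the even-size elements whose reduction does not pass through $(3,1)$, i.e.\ whose signature is empty or ends in $C$.

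\textbf{Your invariant also breaks, exactly at $\Phi_B$.} Your small example is miscomputed. $\Phi_A(2,1)=(3,2)$, not $(3,1,2)$, and $(3,1,1,2)$ is not even cyclic (it has odd size and three odd parts). With correct values, $w=BAC$ gives $\Phi_B(3,2)=(7,2)$ from $(1)$ but $\Phi_B(3,1,2)=(5,2,1)$ from $(1,1)$. Likewise $w=BRC$ gives $(5,2)$ versus $(3,2,1)$. The first parts differ. Moreover the even-side composition now has smaller size than the odd-side one ($8$ versus $9$), so it is not obtained by inserting anything. No strengthening of the form ``the extra part is a $1$ in a fixed position'' can survive this.

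\textbf{No comparison between the two builds is needed.} After $\Phi_A$, $\Phi_B$ or $\Phi_C$ one always has $a_1>a_k$, the result lies in case $A$, $B$ or $C$ respectively, and $r$ undoes the move. After $\Phi_R$ is applied to a composition with $a_1>a_k$, one has $a_1<a_k$, so the reduction applies $R$.

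Hence $\Phi_w(\omega)$ is defined and satisfies $s(\Phi_w(\omega))=w$ exactly when two conditions hold:
\begin{itemize}
\item $w$ contains neither $AR$ nor $RR$ as a factor;
\item if $w$ is nonempty, its last letter is $C$ when $\omega=(1)$, and $B$ or $C$ when $\omega=(1,1)$.
\end{itemize}
This depends only on the word. Comparing the two resulting sets of words gives the corrected statement above directly.
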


 \begin{proof}
    Given some $\pi \in \C_{2m-1}(132,213)$, we first take its signature $s(\pi)$. This means that $\Phi_{s(\pi)}(1) = \pi$. We instead apply $\Phi_{s(\pi)}$ to $(1,1)$. The resulting permutation has even length by Lemma \ref{lem: moves} because each $\Phi$ move increases the size by an even amount, preserving the parity. Since each of the $\Phi$ are injections, their composition is an injection. We can do the exact same thing for any $\pi' \in \C_{2j}(132,213)$ giving us an injection in the other direction, from which we conclude each map is a bijection. Notice that these maps are mutually inverse bijections precisely because for $\pi \in \C_{2m-1}(132,213)$, $s(\Phi_{s(\pi)}(1,1)) = s(\pi)$. So we have,
    \[
    \Phi_{s(\Phi_{s(\pi)}(1,1))}(1) = \pi,
    \]
    likewise for $\pi' \in \C_{2j}(132,213)$, we have $\Phi_{s(\Phi_{s(\pi')}(1))}(1,1) = \pi'$.
 \end{proof}

  Despite the fact that every permutation $\pi \in \C_n(132,213)$ has a unique signature up to knowing the parity of the size of the permutation, it seems difficult to understand how the length of the signature is related to the size of the permutation. That is, the general difficulty in counting $\C_n(132,213)$ exactly via signatures is that the growth we see at each point is dependent on the active boundary. We can find the length of the composition if we know its signature and parity,

  \begin{proposition}
      For $n \ge 1$ if $\pi \in \C_n(132,213)$, let $[C](s(\pi))$ denote the number of $C$'s in the signature,
      \[
      \ell(\kappa(\pi)) = 
      \begin{cases}
          [C](s(\pi)) + 1 & \text{if} \; n \; \text{is odd}\\
          [C](s(\pi)) + 2 & \text{if} \; n \; \text{is even}
      \end{cases}
      \]
  \end{proposition}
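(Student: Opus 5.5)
I will argue by induction on the length of the signature $s(\pi)$, tracking how each reduction move changes the length $\ell$ of the composition. The base case is $s(\pi)=\varnothing$. Then $\pi$ is $(1)$ or $(1,1)$, so $\ell=1$ for $n$ odd and $\ell=2$ for $n$ even, and $[C](s(\pi))=0$, matching the formula. For the inductive step, write $s(\pi)=Xw$ with $X\in\{A,B,C,R\}$. Here $X$ is the first reduction applied to $\pi$, and $w=s(\pi')$ where $\pi'$ is the result of that reduction. By Theorem \ref{thm: reduction}, $\pi'$ is again in $\C_m(132,213)$ with $m\equiv n \pmod 2$, and $\Phi_R$ is an involution preserving size. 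So the inductive hypothesis applies to $\pi'$ with the same parity case. It then suffices to show that $\ell(\kappa(\pi))=\ell(\kappa(\pi'))+1$ when $X=C$, and $\ell(\kappa(\pi))=\ell(\kappa(\pi'))$ otherwise.

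This is read off directly from the case formulas in Theorem \ref{thm: reduction}, or equivalently from Lemma \ref{lem: moves}.
\begin{itemize}
\item \textbf{Move $R$.} Reversal clearly preserves length.
\item \textbf{Case $A$.} $(a_1,\dots,a_k)\mapsto(a_k,a_2,\dots,a_{k-1},2a_k-a_1)$ has $k$ parts, and $2a_k-a_1>0$, so no part vanishes.
\item \textbf{Case $B$.} $(a_1,\dots,a_k)\mapsto(a_1-2a_k,a_k,a_2,\dots,a_{k-1})$ has $k$ parts, with $a_1-2a_k>0$.
\item \textbf{Case $C$.} $(a_1,\dots,a_k)\mapsto(a_k,a_2,\dots,a_{k-1})$ has $k-1$ parts.
\end{itemize}
This matches the remark after the example that only $\Phi_C$ lengthens the composition. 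Summing over the moves in the signature gives $\ell(\kappa(\pi))=\ell(\omega)+[C](s(\pi))$, where $\omega\in\{(1),(1,1)\}$ according to parity.

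The main subtlety is making sure the signature's moves really are exactly these operations and that the induction is well-founded. Three points need checking.
\begin{itemize}
\item The signature is defined by running Algorithm \ref{alg: reduce}, which for cyclic $\pi$ terminates at $(1)$ or $(1,1)$. Termination holds because every non-$R$ move strictly decreases $n$. Also, $R$ is applied only when $a_1<a_k$ and is immediately followed by a size-reducing move, so the signature is finite.
\item The small cases $k=2$ need care, for instance $(2,1)\to(1)$ via $C$. I will appeal to the Remark after Theorem \ref{thm: reduction}, which confirms that the length bookkeeping above holds for $k=2$ as well.
\item Theorem \ref{thm: reduction} requires $n\ge 3$. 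Every $\pi$ with nonempty signature has $n\ge 3$, since the only cyclic avoiders of size at most $2$ are $(1)$ and $(1,1)$, which have empty signature.
\end{itemize}
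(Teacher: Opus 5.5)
Your proof is correct and is essentially the paper's argument: the paper notes that only $\Phi_C$ changes the length, adding one part when building up from $(1)$ or $(1,1)$, while you run the same bookkeeping in the reduction direction by induction on the signature length. Your extra checks on termination, the $k=2$ cases, and the $n\ge 3$ hypothesis of Theorem \ref{thm: reduction} make explicit details the paper leaves implicit.
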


  \begin{proof}
      Only $\Phi_C$ increases the length of the corresponding composition. Since odd sized compositions reduce to $(1)$, they begin with length $1$ and every application of $\Phi_C$ increases their length by $1$. Even sized compositions reduce to $(1,1)$, so they begin with size $2$ and each application of $\Phi_C$ increases their length by $1$. This precisely recovers the stated result.
  \end{proof}

\begin{example}
  As an example, again consider $(2,2,4,2,1)$, this has length $5$ and $n = 11$, and $s(2,2,4,2,1) = CRCRCRBC$. There are four $C$'s in the signature, so since $n$ is odd, the corresponding composition will have length $5$. 
\end{example}
  
  We explore the connection between the size of a permutation $\pi \in \C_n(132,213)$ and its length a bit more in Section \ref{sec: restricted-parts}. We will now use these tools to get a lower bound on the growth rate.

\section{Lower Bound} \label{sec: lower-bound}

We now use the tools we just developed to prove the first non-trivial lower bound on the growth rate of $\C_n(132,213)$. We let $\A_n$ denote the set of $\pi \in \C_{n}(132,213)$ where $\kappa(\pi)$ has active boundary $|1$. This just means that the final entry of the composition is $1$. The ultimate goal of this section is to prove the following lower bound,

\begin{namedthm*}{Theorem \ref{thm: lowerBound}}
    $\liminf_{n \to \infty} |\C_{n}(132,213)|^{1/n} \geq  1.23693$
\end{namedthm*}

We will do this by finding a lower bound on the growth rate of the $\A_n$ sets by recursively injecting smaller $\A_m$ into $\A_n$. We break the proof of this result into some lemmas for ease of reading. First, some more notation. Let $\mathcal{Z}_{2k,n}$ denote the set of $\pi \in \C_{n}(132,213)$ where $\kappa(\pi)$ has active boundary condition $2k|1$ for all $k \geq 1$. This means the first entry of the composition is $2k$ and the final entry is $1$. Note that for fixed $n$ all of the $\cZ_{2k,n}$ are mutually disjoint because compositions with different active boundary conditions are distinct. We can therefore conclude that
\[
|\A_n| \ge \sum_{k=1}^{\lfloor n/2 \rfloor} |\cZ_{2k,n}|.
\]

Our first result connects certain $\cZ$ sets to the $\A$ sets,

\begin{lemma} \label{lem: A-to-Z2}
    For $m \ge 1$, $|\cZ_{2,m+2}| = |\A_{m}|$.
\end{lemma}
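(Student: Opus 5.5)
Fix $\pi \in \cZ_{2,m+2}$, so $\kappa(\pi) = (2,a_2,\dots,a_{k-1},1)$ has boundary $2|1$. We will show that the reduction $r$ of Theorem \ref{thm: reduction} restricts to a bijection $\cZ_{2,m+2} \to \A_m$, with inverse $\Phi_C$ from Lemma \ref{lem: moves}.

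\textbf{The reduction lands in $\A_m$.} Since $a_1 = 2 = 2a_k$, we are in case $C$. The reduction gives $\kappa(r(\pi)) = (1,a_2,\dots,a_{k-1})$, of size $m+2-2(a_1-a_k) = m$. Theorem \ref{thm: reduction} says $r(\pi)$ is cyclic exactly when $\pi$ is, so this composition lies in $\C_m(132,213)$. However, its active boundary is $1|a_{k-1}$, not $|1$. We therefore apply the reversal of Proposition \ref{prop: reverse}. The composition $(a_{k-1},\dots,a_2,1)$ is still cyclic and avoiding, and now ends in $1$, so it lies in $\A_m$. The map is then $\pi \mapsto \Phi_R(r(\pi))$.

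\textbf{The inverse map.} Given $\sigma \in \A_m$ with $\kappa(\sigma) = (b_1,\dots,b_{j-1},1)$, reverse to $(1,b_{j-1},\dots,b_1)$, which has active boundary $1|$. Then apply $\Phi_C$ with $a_1 = 1$. This gives $(2,b_{j-1},\dots,b_1,1)$ in $\C_{m+2}(132,213)$ by Lemma \ref{lem: moves}. Its boundary is $2|1$, so it lies in $\cZ_{2,m+2}$.

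\textbf{The maps are mutually inverse.} The composition $r\circ\Phi_C$ is the identity, as noted in the proof of Lemma \ref{lem: moves}. Conversely, $\Phi_C$ applied to $(1,a_2,\dots,a_{k-1})$ returns $(2,a_2,\dots,a_{k-1},1)$, which is the original composition. Both maps are therefore bijections, and $|\cZ_{2,m+2}| = |\A_m|$.

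\textbf{Degenerate cases.} The only step needing care is small $k$ and $m$, where the interior segment is empty.
\begin{itemize}
\item If $k=2$, then $\pi = (2,1)$ with $m=1$, and $r(\pi) = (1) \in \A_1$, since $(1)$ trivially ends in $1$.
\item Theorem \ref{thm: reduction} requires $n = m+2 \ge 3$, which holds for every $m \ge 1$.
\item Applying $\Phi_C$ to $(1)$ gives $(2,1)$, consistent with the case $k=2$.
\end{itemize}
We also check that no $\pi$ with boundary $2|1$ falls outside case $C$ (it cannot, since $a_1 = 2a_k$ exactly). Finally, for $m \ge 2$ an element of $\A_m$ never starts with $1$, because its ends are unequal by Proposition \ref{prop: unequal-ends}. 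This is not actually needed, since $\Phi_C$ places no condition on the right end.
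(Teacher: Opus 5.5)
Your proof is correct and is essentially the paper's argument: the paper injects $\A_m$ into $\cZ_{2,m+2}$ via $\Phi_C\Phi_R$ and observes that the reduction (move $C$ followed by $R$) inverts it, which is exactly your pair of mutually inverse maps. One small slip is in your closing aside, which you rightly flag as unnecessary: Proposition \ref{prop: unequal-ends} only applies for size greater than $2$, and $(1,1) \in \A_2$ does start with $1$, so that remark should read $m \ge 3$.
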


\begin{proof}
    If $m = 1$, the result is trivially true because $|\A_1| = 1$ and $|\cZ_{2,3}| = 1$ as well. Now assume $m \ge 2$, let $\pi \in \A_{m}$, by definition $\kappa(\pi)$ has active boundary $a_1|1$. If we apply $\Phi_C$ to the reverse, $\Phi_C(\Phi_R(\pi))$, this has active boundary $2|1$ and its size has increased by $2$. So $\Phi_C(\Phi_R(\pi)) \in \cZ_{2,m+2}$. Since both $\Phi_C$ and $\Phi_R$ are injections, so is their composition. Furthermore, since every $\pi \in \cZ_{2,m+2}$ has $a_1 = 2$ and $a_k = 1$, the first case in its reduction is case $C$ and then $R$ which is precisely the inverse of our injection.
\end{proof}

Now we want to relate any $\cZ_{2k,n}$ back to $\cZ_{2,j}$ for some $j$,

\begin{lemma}\label{lem: INJ-1}
    For $k \ge 2$ and $m > 4k-2$, the map $\Phi_{B}\Phi_R\Phi_A^{k-2}\Phi_C$ is a well defined injection from $\cZ_{2,m-(4k-2)}$ into $\cZ_{2k,m}$.
\end{lemma}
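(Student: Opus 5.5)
The plan is to track the active boundary condition through each move of $\Phi_{B}\Phi_R\Phi_A^{k-2}\Phi_C$, applied right to left. At each step I will check that the move's hypotheses from Lemma \ref{lem: moves} hold and record the size increase. Cyclicity and avoidance are then automatic: Lemma \ref{lem: moves} covers the $\Phi_A,\Phi_B,\Phi_C$ steps and Proposition \ref{prop: reverse} covers $\Phi_R$. Injectivity follows because the map is a composition of injections. So the real content is bookkeeping that the image lands in $\cZ_{2k,m}$.

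Fix $j = m-(4k-2) \ge 1$ and $\pi \in \cZ_{2,j}$, so $\kappa(\pi) = (2,a_2,\dots,a_{\ell-1},1)$. When $\cZ_{2,j}$ is empty there is nothing to prove; otherwise $\ell \ge 2$ and the interior may be empty.

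\emph{Step 1, $\Phi_C$.} Reading $\Phi_C$ as the inverse of case $C$ of Theorem \ref{thm: reduction}, it doubles the first part and appends the old first part at the end. This gives $(4,a_2,\dots,a_{\ell-1},1,2)$, of size $j+4$.

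\emph{Step 2, $\Phi_A^{k-2}$.} The boundary is $4|2$ with $4>2$, so $\Phi_A$ applies and sends it to $6|4$, adding $2(4-2)=4$ to the size. Inductively, a boundary $2i|2i-2$ is sent to $2i+2|2i$, adding $4$ each time, and the hypothesis $a_1>a_k$ holds at every stage. After $k-2$ applications (none when $k=2$) we have $(2k,a_2,\dots,a_{\ell-1},1,2k-2)$ of size $j+4+4(k-2)$.

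\emph{Step 3, $\Phi_R$.} Reversal gives $(2k-2,1,a_{\ell-1},\dots,a_2,2k)$, with the size unchanged.

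\emph{Step 4, $\Phi_B$.} The leading pair is $a_1,a_2| = 2k-2,1|$. Reading $\Phi_B$ as the inverse of case $B$, it replaces the first two parts $b_1,b_2$ by $b_1+2b_2$ and moves $b_2$ to the end. The result is $(2k,a_{\ell-1},\dots,a_2,2k,1)$, of size $j+4(k-1)+2 = j+4k-2 = m$. Its boundary is $2k|1$, so the image lies in $\cZ_{2k,m}$.

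The main obstacle is Steps 1 and 4. The statement of Lemma \ref{lem: moves} records only the boundary condition, so I must pin down exactly how the full composition changes. I will do this by checking that case $C$ (respectively case $B$) of Theorem \ref{thm: reduction} applied to the output returns the input. For Step 4, the output has $a_1 = 2k > 2\cdot 1$, so case $B$ applies and returns $(2k-2,1,a_{\ell-1},\dots,a_2,2k)$. A secondary point is the degenerate short case $\ell=2$, namely $\pi=(2,1)$ with $j=3$, where the interior is empty. There the same formulas still apply, following the Remark after Theorem \ref{thm: reduction}.
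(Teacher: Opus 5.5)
Your proposal is correct and follows the paper's proof essentially step for step. It tracks the same compositions $(4,\dots,1,2)\to(2k,\dots,1,2k-2)\to(2k-2,1,\dots,2k)\to(2k,\dots,2k,1)$ with size increases $4$, $4(k-2)$, $0$, $2$, and gets injectivity by composing the injections of Lemma \ref{lem: moves}. Your extra check, that the reduction of Theorem \ref{thm: reduction} returns each input, adds a little rigor the paper leaves implicit.
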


\begin{proof}
    Given $\pi \in \cZ_{2,m-4k+2}$, we claim $\Phi_B\Phi_R\Phi_A^{k-2}\Phi_C(\pi) \in \cZ_{2k,m}$. Let's track what happens to $\pi$ and how much $m$ changes. $\kappa(\pi) = (2,a_2,\dots,a_{j-1},1)$, after applying $\Phi_C$ we get $(4,a_2,\dots,a_{j-1},1,2)$ at a cost of $4$. Every time we apply $\Phi_A$ now it increases both the first and last entry by $2$ and costs $4$, so if we apply it $k-2$ times we end up with $(4+2k-4,a_2,\dots,a_{j-1},1,2k-2)$ at a cost of $4(k-2)$. Now if we reverse we get $(2k-2,1,a_{j-1},\dots,a_2,2k)$, if we apply $\Phi_B$ we end up with $(2k,a_{j-1},\dots,a_2,2k,1)$ at a cost of $2$. This has first entry $2k$, final entry $1$ and we have increased the size of the composition by $4 + 4(k-2) + 2 = 4k-2$. Since by Lemma \ref{lem: moves} each of the $\Phi$ maps is an injection, so is their composition.
\end{proof}

From this we can conclude that $|\cZ_{2k,m}| \ge |\cZ_{2,m-(4k-2)}|$. This would give us a lower bound (see Remark \ref{remark: truncation}), but we can do better.

\begin{lemma}\label{lem: INJ-2}
    For $k \ge 2$ and $m > 4k-2$, the map $\Phi_B \Phi_R \Phi_C$ is an injection from $\cZ_{2k-2,m-(4k-2)}$ into $\cZ_{2k,m}$.
\end{lemma}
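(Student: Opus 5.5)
Following the pattern of Lemma \ref{lem: INJ-1}, I will track the composition of a given $\pi \in \cZ_{2k-2,m-(4k-2)}$ through $\Phi_C$, then $\Phi_R$, then $\Phi_B$, recording both the new boundary and the size increase. Injectivity will then follow because each $\Phi$ move is an injection (Lemma \ref{lem: moves}), and so is the involution $\Phi_R$; their composition is therefore injective. Each move also preserves cyclicity and $(132,213)$-avoidance by Lemma \ref{lem: moves}. So the real content is checking the boundary and the size.

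Write $\kappa(\pi) = (2k-2, a_2, \dots, a_{j-1}, 1)$. When $k=2$ this reads $(2,\dots,1)$. The degenerate case $j=2$, i.e. $\kappa(\pi)=(2k-2,1)$, should be handled by the same computation with an empty middle segment.

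\textbf{Step 1.} Apply $\Phi_C$ to the active boundary $a_1| = (2k-2)|$. This gives
\[
(4k-4, a_2, \dots, a_{j-1}, 1, 2k-2),
\]
at a cost of $2(2k-2) = 4k-4$.

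\textbf{Step 2.} Reverse to get $(2k-2, 1, a_{j-1}, \dots, a_2, 4k-4)$.

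\textbf{Step 3.} Apply $\Phi_B$ to the boundary $a_1,a_2| = 2k-2,1|$. This yields first entry $(2k-2)+2\cdot 1 = 2k$ and appends $1$ at the end. The result is
\[
(2k, a_{j-1}, \dots, a_2, 4k-4, 1),
\]
at a cost of $2$.

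The total size increase is $4k-4+2 = 4k-2$. The active boundary is $2k|1$, so the image lies in $\cZ_{2k,m}$.

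I need to be careful about the exact form of $\Phi_B$: it replaces the first two parts $a_1,a_2$ by $a_1+2a_2$ and appends $a_2$ at the end. I will confirm this against Theorem \ref{thm: reduction} case $B$, whose reduction sends $(a_1',\dots,a_k')$ with $a_1'>2a_k'$ to $(a_1'-2a_k', a_k', a_2',\dots)$. Here $2k > 2 = 2\cdot 1$ because $k\ge 2$, so we are indeed in case $B$, and the reduction of the image gives back $(2k-2, 1, \dots, 4k-4)$.

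The main (mild) obstacle is making sure the hypotheses of each move are met and that nothing degenerates. $\Phi_C$ and $\Phi_B$ only require positive entries, and the middle segment may be empty. The condition $m > 4k-2$ guarantees the source set is well defined, with size at least $1$; if the source set is empty, the claim is vacuous. Note that for $k=2$ the source set is $\cZ_{2,\cdot}$, which is consistent.

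Finally, I will remark that the images from Lemma \ref{lem: INJ-1} and from this lemma are distinguished by their signatures. In Lemma \ref{lem: INJ-1} the second-to-last entry of the image ends up as $2k$, whereas here it is $4k-4$. These agree only when $k=2$, and in that case both maps are literally $\Phi_B\Phi_R\Phi_C$ acting on the same source $\cZ_{2,\cdot}$. This observation matters for the later disjointness argument, but it is not needed for the statement itself.
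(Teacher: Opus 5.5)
Your proposal is correct and matches the paper's proof essentially step for step: you track the boundary $2k-2|1 \to 4k-4|1,2k-2 \to 2k-2,1|4k-4 \to 2k|4k-4,1$ with size increases $4k-4$ and $2$, and you get injectivity because a composition of the injections from Lemma \ref{lem: moves} (together with the involution $\Phi_R$) is injective. Your closing remark is not needed for this lemma, but it is also valid: for $k\ge 3$ the second-to-last entries of the two images are $2k$ and $4k-4$, which differ, and this gives a quick alternative to the signature argument in Lemma \ref{lem: disjoint-injs}.
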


\begin{proof}
    Given $\pi \in \cZ_{2k-2,m-(4k-2)}$ by definition its boundary is $2k-2|1$. If we apply $\Phi_C$ we get a permutation with boundary $4k-4|1,2k-2$ whose size has increased by $4k-4$. Now reverse to get $2k-2,1|4k-4$. If we apply $\Phi_B$ we get a permutation with boundary $2k|4k-4,1$ with size increased by $2$, which is now in $\cZ_{2k,m}$. In all, we found $\Phi_B\Phi_R\Phi_C(\pi) \in \cZ_{2k,m}$. Since each map is an injection, so is their composition.
\end{proof}

Notice these two maps agree when $k = 2$. But for $k \ge 3$, they give different images. We will make this explicit now,

\begin{lemma} \label{lem: disjoint-injs}
    For $k \ge 3$, the images of the two injections from Lemmas \ref{lem: INJ-1} and \ref{lem: INJ-2} are disjoint.
\end{lemma}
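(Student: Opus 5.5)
The plan is to tell the two images apart by their reduction: because the reduction of Theorem \ref{thm: reduction} is deterministic, each element of $\C_m(132,213)$ has a unique signature. An element lying in both images would therefore have a single first reduction step and a single resulting composition. Compose the moves on the composition level, as in the proofs of Lemmas \ref{lem: INJ-1} and \ref{lem: INJ-2}, to see what the first few reductions of each image look like, and then exhibit an invariant of the images that differs.

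For $\pi \in \cZ_{2,m-(4k-2)}$ with $\kappa(\pi)=(2,a_2,\dots,a_{j-1},1)$, the proof of Lemma \ref{lem: INJ-1} gives
\[
\Phi_B\Phi_R\Phi_A^{k-2}\Phi_C(\pi) = (2k, a_{j-1},\dots,a_2, 2k, 1).
\]
For $\sigma \in \cZ_{2k-2,m-(4k-2)}$ with $\kappa(\sigma) = (2k-2,b_2,\dots,b_{\ell-1},1)$, the image is obtained as follows. Applying $\Phi_C$ gives $(4k-4,b_2,\dots,b_{\ell-1},1,2k-2)$. Reversing gives $(2k-2,1,b_{\ell-1},\dots,b_2,4k-4)$. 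Applying $\Phi_B$ then gives
\[
(2k, b_{\ell-1},\dots,b_2,4k-4,1).
\]
So the first image has second-to-last part $2k$, and the second has second-to-last part $4k-4$. For $k\ge 3$ we have $4k-4 \neq 2k$, since equality holds only at $k=2$, consistent with the remark that the maps agree when $k=2$.

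The main obstacle is a degenerate short case, where the interior segment is empty and the "second-to-last part" is actually a part that came from somewhere else. I would dispose of it as follows. In the first image the composition $(2,1)$ gives $j=2$, empty interior, and image $(2k,2k,1)$. Here the penultimate entry is still the $2k$ created by the first $\Phi_B$ step, so the part is genuinely $2k$. In the second image, $\sigma=(2k-2,1)$ gives $(2k,4k-4,1)$, so again the penultimate part is $4k-4$. In both maps the penultimate part is the one written down by the final $\Phi_B$ (the old first part after the earlier moves), so it is always well-defined. Thus every element of the first image ends in $2k,1$ and every element of the second ends in $4k-4,1$, and the images are disjoint for $k\ge3$.

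As a cross-check, I would alternatively argue via the reduction. Both images reduce first by case $B$ (since $2k>2$), then by $R$, landing at $(2k-2,1,\dots,2k)$ versus $(2k-2,1,\dots,4k-4)$. The next step is case $A$ in the first image ($2k-2 < 2k < 2(2k-2)$ for $k\ge3$) and case $C$ in the second ($4k-4 = 2(2k-2)$). The signatures therefore differ in their third letter, which again shows the images are disjoint by uniqueness of signatures. I expect the direct composition comparison to suffice, and I would present it as the proof.
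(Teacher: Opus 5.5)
Your proof is correct, but your main argument takes a different route from the paper's.

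The paper separates the two images by their signatures, $s(\pi)=BRA^{k-2}C\,s(\tau)$ versus $s(\pi')=BRC\,s(\tau')$. It then uses the determinism of the reduction (each permutation has exactly one signature), so the third letter, $A$ versus $C$, distinguishes the images.

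You instead compute the images explicitly as $(2k,a_{j-1},\dots,a_2,2k,1)$ and $(2k,b_{\ell-1},\dots,b_2,4k-4,1)$. You separate them by their penultimate part, using only that $\kappa$ is a bijection; both formulas check out.

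Your route is more elementary and self-contained. It does not need signatures to be well defined. It also avoids verifying that each reduction step undoes the corresponding $\Phi$ move, which the paper's ``we can read off'' implicitly requires; for example, one must check $2(2k-2)>2k$ so that case $A$ fires rather than $C$.

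Your worry about a degenerate short case is unnecessary. Both formulas always have at least three parts, and their last two parts are written out explicitly. The penultimate part is the first part after $\Phi_A^{k-2}\Phi_C$ (resp.\ $\Phi_C$), reversed to the end, not something ``created by'' $\Phi_B$; only your wording there is off.

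The paper's route is shorter once the signature machinery exists, and it is the same template reused in Lemma \ref{lem: AtoAB}. It therefore scales better to comparing long move sequences, where explicit compositions become unwieldy.

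Your cross-check is essentially the paper's proof, with one small slip. $(2k-2,1,\dots,2k)$ is the composition after the $B$ step and before $R$. The case test $2k-2<2k<2(2k-2)$ you apply belongs to its reversal $(2k,\dots,1,2k-2)$.
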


\begin{proof}
    Let $\pi = \Phi_B\Phi_R\Phi_A^{k-2}\Phi_C(\tau)$ for $\tau \in \cZ_{2,m-(4k-2)}$ be in the image of the map from Lemma \ref{lem: INJ-1}, and let $\pi' = \Phi_B\Phi_R\Phi_C(\tau')$ for $\tau' \in \cZ_{2k-2,m-(4k-2)}$ be in the image of the map from Lemma \ref{lem: INJ-2}. Since the reduction process is deterministic and inverse to our $\Phi$ maps, we can read off
    \[
    s(\pi) = B\,R\,A^{k-2}\,C\,s(\tau) \qquad \text{and} \qquad s(\pi') = B\,R\,C\,s(\tau').
    \]
    For $k \ge 3$ we have $k-2 \ge 1$, so $s(\pi)$ begins with $BRA$ while $s(\pi')$ begins with $BRC$. Permutations with different signatures must be unique by Theorem \ref{thm: reduction}.
\end{proof}

Notice the case $k = 2$ is excluded from this Lemma because there the $\Phi_A^{k-2}$ is empty and the maps agree. We will only count this case once through Lemma \ref{lem: INJ-1}. We are almost ready to prove the main result. For ease of notation, let $\alpha_1(n) = |\A_{n-2}|$, $\alpha_2(n) = |\A_{n-8}|$ and for $k \ge 3$ let
    \[
    \alpha_k(n) = |\A_{n-4k}| + \alpha_{k-1}(n-(4k-2)),
    \]
where $|\A_1| = 1$, $|\A_2| = 1$ and by convention $|\A_m| = 0$ for $m \le 0$.

\begin{theorem} \label{thm: lowerBound}
    $\liminf_{n \to \infty} |\C_n(132,213)|^{1/n} \ge 1.23693$.
\end{theorem}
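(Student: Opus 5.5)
The plan is to turn Lemmas \ref{lem: A-to-Z2}, \ref{lem: INJ-1}, \ref{lem: INJ-2} and \ref{lem: disjoint-injs} into a linear recurrence inequality for $|\A_n|$, and then read off its exponential growth rate. Since $\A_n \subseteq \C_n(132,213)$, any lower bound on the growth of $|\A_n|$ transfers directly to $|\C_n(132,213)|$.

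\textbf{Step 1: show $|\cZ_{2k,n}| \ge \alpha_k(n)$ by induction on $k$.}
\begin{itemize}
\item For $k=1$ this is Lemma \ref{lem: A-to-Z2}, which gives $|\cZ_{2,n}| = |\A_{n-2}| = \alpha_1(n)$.
\item For $k=2$, Lemma \ref{lem: INJ-1} gives $|\cZ_{4,n}| \ge |\cZ_{2,n-6}| = |\A_{n-8}| = \alpha_2(n)$.
\item For $k \ge 3$, Lemma \ref{lem: disjoint-injs} says the images of the injections in Lemmas \ref{lem: INJ-1} and \ref{lem: INJ-2} are disjoint. Hence
\[
|\cZ_{2k,n}| \ge |\cZ_{2,n-(4k-2)}| + |\cZ_{2k-2,n-(4k-2)}| \ge |\A_{n-4k}| + \alpha_{k-1}(n-(4k-2)) = \alpha_k(n).
\]
\end{itemize}
When the hypothesis $m > 4k-2$ of those lemmas fails, the corresponding terms vanish by the convention $|\A_m|=0$ for $m \le 0$. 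Combining with the disjoint union bound gives
\[
|\A_n| \ge \sum_{k\ge 1} \alpha_k(n).
\]

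\textbf{Step 2: rewrite the right-hand side as a convolution of the sequence $|\A_m|$ against fixed nonnegative weights.}
Unrolling the definition of $\alpha_k$ shows each $\alpha_k(n)$ is a sum of terms $|\A_{n-d}|$ over a finite set of shifts $d$. In generating-function language, put $g_1(x)=x^2$, $g_2(x)=x^8$, and
\[
g_k(x) = x^{4k} + x^{4k-2}g_{k-1}(x) \quad (k \ge 3).
\]
Then $\sum_k \alpha_k(n)$ is the coefficient of $x^n$ in $F(x)A(x)$, where $F=\sum_k g_k$ and $A(x)=\sum_n |\A_n|x^n$. So $|\A_n| \ge \sum_d f_d |\A_{n-d}|$ with $f_d \ge 0$ and all $d$ even. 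Because every shift is even, I treat odd and even $n$ separately. Both parities have positive seeds ($|\A_1|=|\A_2|=1$), and the shift $d=2$ alone guarantees $|\A_n|\ge 1$ for every $n$, so neither parity class dies out.

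\textbf{Step 3: compare with the extremal recurrence.}
Truncate $F$ to $F_K=\sum_{k\le K} g_k$, a polynomial with nonnegative coefficients, and pass to $y=x^2$ to remove the periodicity. By induction, $|\A_n|$ dominates a sequence $b_n$ satisfying the finite linear recurrence $b_n = \sum_d [x^d]F_K\, b_{n-d}$ with positive initial values. The coefficient of $y$ is $1$, so the recurrence is aperiodic in $y$. Standard Perron--Frobenius/renewal theory then gives $b_n^{1/n} \to 1/\rho_K$, where $\rho_K$ is the unique positive root of $F_K(x)=1$. Hence
\[
\liminf_{n\to\infty} |\A_n|^{1/n} \ge 1/\rho_K .
\]
Because the coefficients are nonnegative, $\rho_K$ decreases in $K$, so each truncation gives a valid bound and larger $K$ only improves it.

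\textbf{Main obstacle: certifying the constant.} The delicate step is showing that $1/\rho_K \ge 1.23693$ for some explicit $K$. Here is how I would handle it:
\begin{itemize}
\item Compute the coefficients of $g_k$ from the recursion.
\item Check by exact or interval arithmetic that $F_K(1/1.23693) \ge 1$ for a moderate $K$. This forces $\rho_K \le 1/1.23693$, since $F_K$ is increasing on $(0,\infty)$.
\item The tail $\sum_{k>K} g_k(x)$ is bounded by a geometric series in $x^4$, because $\deg_{\min} g_k \approx 4k$. This confirms the truncation loses nothing beyond the stated digits.
\end{itemize}
Finally, $\C_n(132,213)\supseteq\A_n$ transfers the bound to the full class.
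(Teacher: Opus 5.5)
Your proposal is correct and follows the paper's proof: the same induction $|\cZ_{2k,n}|\ge\alpha_k(n)$ via Lemmas \ref{lem: A-to-Z2}--\ref{lem: disjoint-injs}, then reading the growth rate off the smallest positive root of $1-F(x)$. Your Steps 2--3 make rigorous the truncated root computation that the paper leaves to Sage. The certification does go through, but with little room: numerically $F_K(1/1.23693)-1\approx 3\times 10^{-5}$ for $K\approx 15$, so exact or interval arithmetic is indeed needed. The tail estimate is unnecessary, since every truncation already gives a valid lower bound.
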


\begin{proof}
    We will show that for every $n \ge 1$ and $k \ge 1$, we have $|\cZ_{2k,n}| \ge \alpha_k(n)$ from which we conclude
    \[
    |\A_n| \ge \sum_{k \ge 1} \alpha_k(n).
    \]
    We prove $|\cZ_{2k,n}| \ge \alpha_k(n)$ by induction on $k$. The base case $k=1$ is implied by Lemma \ref{lem: A-to-Z2}. For $k=2$, Lemma \ref{lem: INJ-1} gives $|\cZ_{4,n}| \ge |\cZ_{2,n-6}| = |\A_{n-8}| = \alpha_2(n)$, using Lemma \ref{lem: A-to-Z2} for the middle equality. Now let $k \ge 3$ and assume the claim for $k-1$. By Lemmas \ref{lem: INJ-1} and \ref{lem: INJ-2} we have
    \[
    |\cZ_{2k,n}| \ge |\cZ_{2,n-4k+2}| + |\cZ_{2k-2,n-(4k-2)}| \ge |\A_{n-4k}| + |\cZ_{2k-2,n-(4k-2)}|,
    \]
    since the two injections are disjoint when $k \ge 3$ by Lemma \ref{lem: disjoint-injs}. If we apply the inductive hypothesis we conclude that $|\cZ_{2k-2,n-(4k-2)}| \ge \alpha_{k-1}(n-(4k-2))$. Substituting this in we conclude $|\cZ_{2k,n}| \ge \alpha_k(n)$. The result follows by induction.

    Since $|\A_n| \ge \sum_{k=1}^{\lfloor n/2 \rfloor} |\cZ_{2k,n}|$ we conclude $|\A_n| \ge \sum_{k \ge 1} \alpha_k(n).$ This implies 
    \[
    \liminf_{n \to \infty} |\C_n(132,213)|^{1/n} \ge \liminf_{n \to \infty} |\A_n|^{1/n} \ge 1.23693
    \] 
    as claimed.
\end{proof}

\begin{remark}
    To find this bound, one expands the recursion, $|\A_n| \ge |\A_{n-2}| + \sum_{d \in D} m_d |\A_{n-d}|$ for an explicit set of shifts $D$ with multiplicities $m_d$. The bound then follows by considering the roots of $1 - x^2 - \sum_{d \in D} m_d x^d$, truncated at $d \le 200$; we use Sage \cite{Sage} to approximate the dominant root.
\end{remark}

\begin{remark} \label{remark: truncation}
   We note that without the recursion, just using Lemma \ref{lem: INJ-1}, one can derive a lower bound of approximately $1.2143$ from $|\A_n| \ge |\A_{n-2}| + \sum_{k \ge 2} |\A_{n-4k}|$. The recursion improves the lower bound so we included it.
\end{remark}

In \cite{H19}, Huang experimentally estimated that the actual growth rate is somewhere around $2^{.38}\approx 1.301$ so this is fairly close and we only consider the compositions that start or end with $1$. We still miss some elements in our injections into $\cZ_{2k,n}$. One way to potentially improve this bound is to add terms to the recursion by finding injections into other parts of $\cZ_{2k,n}$.

\subsection{Another approach}

We have another approach to this lower bound that we think could be improved, but currently provides a worse lower bound than Theorem \ref{thm: lowerBound}. We include it in the hopes that it could lead to a better lower bound eventually. We let $\cB_n$ denote the set of $\pi \in \C_n(132,213)$ where $\kappa(\pi)$ has active boundary $|1,a_k$, meaning the second to last entry in the composition is $1$.

\begin{lemma} \label{lem: AtoAB}
    For $n \geq 1$ we have $|\A_n| = |\A_{n-2}| + |\cB_{n-2}|$.
\end{lemma}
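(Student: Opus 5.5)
The plan is to split $\A_n$ according to which case of Theorem \ref{thm: reduction} applies first, and show that the reduction $r$ gives a bijection from one part onto $\A_{n-2}$ and from the other part onto $\cB_{n-2}$, up to the reversal $\Phi_R$. Let $\pi\in\A_n$ with $n\ge 3$, so $\kappa(\pi)=(a_1,\dots,a_{k-1},1)$ with $k\ge 2$. By Proposition \ref{prop: unequal-ends}, $a_1\neq 1$, so $a_1\ge 2=2a_k$. Hence case $R$ never occurs first, and only cases $C$ and $B$ are possible.

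First suppose $a_1=2$. We are in case $C$, and $r(\pi)$ has composition $(1,a_2,\dots,a_{k-1})$ of size $n-2$. Its reverse ends in $1$, so $\Phi_R(r(\pi))\in\A_{n-2}$ by Theorem \ref{thm: reduction} and Proposition \ref{prop: reverse}. Conversely, take $\sigma\in\A_{n-2}$, reverse it so that it starts with $1$, and apply $\Phi_C$, which gives $2|1$. This is exactly the map $\Phi_C\Phi_R$ used in Lemma \ref{lem: A-to-Z2}. So we get a bijection between $\{\pi\in\A_n : a_1=2\}=\cZ_{2,n}$ and $\A_{n-2}$, and the identity $|\cZ_{2,n}|=|\A_{n-2}|$ is Lemma \ref{lem: A-to-Z2} itself.

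Next suppose $a_1\ge 3>2a_k$. We are in case $B$, and $r(\pi)$ has composition $(a_1-2,1,a_2,\dots,a_{k-1})$ of size $n-2$. Its reverse $(a_{k-1},\dots,a_2,1,a_1-2)$ has second-to-last entry $1$, so it lies in $\cB_{n-2}$. Conversely, for $\sigma\in\cB_{n-2}$ the reverse has boundary $b,1|$, and $\Phi_B$ sends this to $(b+2)|1$. By Lemma \ref{lem: moves} the result is cyclic, it ends in $1$, and its first entry is $b+2\ge 3$, so it lies in $\A_n$ with $a_1\ge 3$. The two constructions $\Phi_B\Phi_R$ and $\Phi_R\circ r$ are mutually inverse because $r$ inverts $\Phi_B$ (Lemma \ref{lem: moves}). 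So $\{\pi\in\A_n: a_1\ge 3\}$ is in bijection with $\cB_{n-2}$.

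Since the two pieces of $\A_n$ are disjoint (first entry $2$ versus first entry at least $3$), adding the two bijections gives $|\A_n|=|\A_{n-2}|+|\cB_{n-2}|$. It does not matter that $\A_{n-2}$ and $\cB_{n-2}$ may overlap, since each set is mapped bijectively onto its own piece. The main care point is the boundary bookkeeping, and the small cases with $k=2$. For example, $(2,1)$ reduces to $(1)$, and the segment $a_2,\dots,a_{k-1}$ is empty there, as in the Remark following Theorem \ref{thm: reduction}. For $n\le 2$ the identity must be read with the conventions on $|\A_m|$ and $|\cB_m|$ for small or nonpositive $m$. I would check these directly, noting that the intended range is really $n\ge 3$, where $a_1\ne a_k$ is guaranteed.
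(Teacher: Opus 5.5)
Your proof is correct and follows the paper's argument. Both split $\A_n$ into $a_1=2$ (case $C$) and $a_1\ge 3$ (case $B$), pass down via $\Phi_R\circ r$, and return via $\Phi_C\Phi_R$ and $\Phi_B\Phi_R$. You separate the two pieces by first entry rather than by signature, and you check two-sided inverses instead of proving two inequalities, which is slightly cleaner.

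On the small cases, the direct check shows the identity actually fails for $n=1,2$. For example, $|\A_2|=1$ (from $(1,1)$) while $\A_0=\cB_0=\emptyset$. So $n\ge 3$ is the true range, and the paper's proof also tacitly needs this, since it invokes Proposition~\ref{prop: unequal-ends}, which requires $n>2$.
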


\begin{proof}
    Let $\pi \in \A_n$, there are two cases to consider based on how $\kappa(\pi)$ begins. By Proposition \ref{prop: unequal-ends} we know $a_1 \not= 1$, so it either has active boundary $2|1$ or $a_1|1$ with $a_1 > 2$. In the first case, if we wind down we get $r(\pi)$ with active boundary $1|$ of size $n-2$. These are in bijection with $\A_{n-2}$. In the second case, $r(\pi)$ has boundary $a_1-2,1|$. The reversal of this is an element of $\cB_{n-2}$.

    This shows $|\A_{n-2}| + |\cB_{n-2}| \geq |\A_n|$, but both of these winding down moves are easily reversible. To inject $\cB_{n-2}$ into $\A_n$ we just reverse the winding down move, meaning we apply $\Phi_B\Phi_R$. To inject $\A_{n-2}$ into $\A_n$ we just apply $\Phi_C\Phi_R$. Their images are disjoint because every element in the image of $\A_{n-2}$ has signature beginning with $CR$ while every element in the image of $\cB_{n-2}$ has signature beginning with $BR$. Permutations with different signatures must be unique by Theorem \ref{thm: reduction}. This proves the desired equality.
\end{proof}

\begin{remark}
    We note that if $n$ is even, it is possible that two elements of $\A_n$ wind down to the same composition. For example, $(3,2,1)$ and $(2,1,2,1)$ both wind down to $(1,1,2)$, whose reversal $(2,1,1)$ then sits in both $\cB_{n-2}$ and $\A_{n-2}$. This occurs because the reduction process is not injective, just the constructive moves are. Notice the two preimages are still distinguished by their signatures: $s(3,2,1) = BRC$ while $s(2,1,2,1) = CRC$, which is exactly the $BR$ versus $CR$ distinction from the proof.
\end{remark}

\begin{lemma} \label{lem: BbelowA}
    For $n \geq 7$, we have $|\cB_{n}| \geq |\A_{n-6}|$.
\end{lemma}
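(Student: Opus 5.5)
The plan is to build an explicit injection $\A_{n-6} \to \cB_n$ out of the moves of Lemma \ref{lem: moves}, in the same way as Lemmas \ref{lem: A-to-Z2}, \ref{lem: INJ-1} and \ref{lem: INJ-2}. The candidate map is $\Phi_C\Phi_C\Phi_R$, which is $\Phi_C$ composed with the map $\Phi_C\Phi_R$ from Lemma \ref{lem: A-to-Z2}. What needs checking is that this map raises the size by exactly $6$, no matter what $\pi$ is, and that its image has second-to-last part equal to $1$.

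\textbf{Step 1: compute the image.} Let $\pi \in \A_{n-6}$ with $\kappa(\pi) = (a_1,a_2,\dots,a_{k-1},1)$. Reversing gives $\Phi_R(\pi)$ with composition $(1,a_{k-1},\dots,a_2,a_1)$. Its active boundary is $1|$, so applying $\Phi_C$ gives $(2,a_{k-1},\dots,a_2,a_1,1)$ at a cost of $2$. This is exactly the element of $\cZ_{2,n-4}$ produced in Lemma \ref{lem: A-to-Z2}. Its active boundary is now $2|$, so applying $\Phi_C$ once more gives
\[
(4,a_{k-1},\dots,a_2,a_1,1,2)
\]
at a cost of $4$. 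The total increase in size is $2+4 = 6$, and the second-to-last part is $1$. By Lemma \ref{lem: moves} the result is cyclic, so it lies in $\cB_n$.

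The degenerate case $k=1$ needs a separate check, since then $\pi = (1)$ and $n = 7$. Here the chain is $(1) \mapsto (2,1) \mapsto (4,1,2)$, and $(4,1,2)$ lies in $\cB_7$. The hypothesis $n \ge 7$ is used only to ensure $n-6 \ge 1$, so that $\A_{n-6}$ is a genuine set.

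\textbf{Step 2: injectivity.} $\Phi_R$ is an involution, and $\Phi_C$ is an injection by Lemma \ref{lem: moves}. Hence the composite $\Phi_C\Phi_C\Phi_R$ is injective, which gives $|\cB_n| \ge |\A_{n-6}|$.

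As a cross-check, the image is exactly the set of elements of $\cB_n$ whose signature begins with $CCR$. The reduction of $(4,\dots,1,2)$ is case $C$, since $a_1 = 2a_k$. That lands at $(2,\dots,1)$, which is again case $C$ and reduces to $(1,\dots)$. A reversal then returns $\pi$.

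The main obstacle is the bookkeeping in Step 1: making sure each move is applied to a boundary it is actually allowed on, and that the cost does not depend on $a_1$. A naive first attempt such as $\Phi_C(\pi)$ directly does land in $\cB$, but it costs $2a_1$, which varies with $\pi$. Reversing first is what makes the cost constant, because it puts a $1$ at the front, so the two $\Phi_C$ steps cost $2$ and then $4$.
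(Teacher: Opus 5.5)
Your proposal is correct and is essentially the paper's own proof: the same injection $\Phi_C\Phi_C\Phi_R$, the same computation of the image $(4,a_{k-1},\dots,a_1,1,2)$ with total size increase $2+4=6$, and injectivity because it is a composite of injections. Your separate check of $\pi=(1)$ adds a little care the paper omits; one harmless slip is in your cross-check, where for $n=7,8$ the images $(4,1,2)$ and $(4,1,1,2)$ have signature $CC$ rather than one beginning with $CCR$, because $\Phi_R$ fixes the terminal compositions $(1)$ and $(1,1)$.
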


\begin{proof}
    Given $\pi \in \A_{n-6}$, we know $\pi$ has active boundary $a_1|1$. If we apply $\Phi_C\Phi_C\Phi_R$, the resulting permutation has active boundary $4|a_1,1,2$ and its size has increased by $6$. This is in $\cB_n$ and since it is a composition of injections, we conclude $|\A_{n-6}| \le |\cB_n|$.
\end{proof}

\begin{proposition}
    For $n \geq 7$, $|\A_n| \ge |\A_{n-2}| + |\A_{n-8}|$, which implies $\liminf_{n \to \infty} |\C_{n}(132,213)|^{1/n} \geq 1.1748$.
\end{proposition}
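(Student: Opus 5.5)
The plan is to combine Lemma \ref{lem: AtoAB} with Lemma \ref{lem: BbelowA}. Take $n \ge 7$ and write $|\A_n| = |\A_{n-2}| + |\cB_{n-2}|$ using Lemma \ref{lem: AtoAB}. Then bound $|\cB_{n-2}|$ from below by $|\A_{n-8}|$ using Lemma \ref{lem: BbelowA}. That lemma needs its size argument to be at least $7$, so this step directly covers $n-2 \ge 7$, that is $n \ge 9$.

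For $n = 7,8$ I would instead check $|\A_{n-8}| = 0$ or use the convention $|\A_m| = 0$ for $m \le 0$. For $n=8$ we have $\A_0$, which is empty by convention, so the inequality reduces to $|\A_n| \ge |\A_{n-2}|$. That follows from Lemma \ref{lem: AtoAB}. For $n=7$ we need $|\A_{-1}| = 0$, which also holds by convention.

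So the recursion $|\A_n| \ge |\A_{n-2}| + |\A_{n-8}|$ holds for all $n \ge 7$.

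For the growth rate, the step that needs the most care is passing from the linear recurrence inequality to an exponential lower bound. I would compare $|\A_n|$ with the sequence $b_n$ defined by $b_n = b_{n-2} + b_{n-8}$, using matching positive initial values on a window of $8$ consecutive indices of a fixed parity. Induction then gives $|\A_n| \ge c\, b_n$ for some $c>0$.

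The characteristic equation of $b_n$ is $x^8 = x^6 + 1$. Setting $y = x^2$, this becomes $y^4 = y^3 + 1$. This polynomial has a unique real root $y_0 \approx 1.3803$, which is also its dominant root, giving $x_0 = \sqrt{y_0} \approx 1.1748$.

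One subtlety is parity. The recurrence only links indices of the same parity, so I would run the argument separately on odd $n$ and on even $n$. For each parity I need some starting values to be nonzero. For odd $n$, $|\A_1| = 1$. For even $n$, $|\A_2| = 1$, since $(1,1)$ ends in $1$. Because the recursion adds nonnegative terms and the $|\A_{n-2}|$ term propagates positivity, every index of that parity is positive from then on. That gives enough positive initial data to conclude $b_n \ge C x_0^{n}$.

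Finally, $\A_n \subseteq \C_n(132,213)$, so
\[
\liminf_{n\to\infty} |\C_n(132,213)|^{1/n} \ge \liminf_{n\to\infty} |\A_n|^{1/n} \ge x_0 \approx 1.1748.
\]
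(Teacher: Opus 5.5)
Your proposal is correct and is the paper's argument: Lemma \ref{lem: AtoAB} plus Lemma \ref{lem: BbelowA}, then the dominant root of $1-x^2-x^8$, and you handle $n=7,8$, parity, and the passage from the recurrence inequality to an exponential bound more carefully than the paper does. One small slip is that $y^4=y^3+1$ also has a negative real root near $-0.82$, so the correct statement is that $y_0\approx 1.3803$ is its unique \emph{positive} root; the cleanest finish is the direct induction $|\A_{2m+\epsilon}|\ge C y_0^{m}$ via $y_0^{m}=y_0^{m-1}+y_0^{m-4}$, with the four initial values made positive by $\kappa^{-1}(n-1,1)\in\A_n$ rather than by the recursion, which only starts at $n=7$.
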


\begin{proof}
    Combining Lemmas \ref{lem: AtoAB} and \ref{lem: BbelowA} yields the first claim. The second claim then follows immediately by considering the roots of $1-x^2-x^8$.
\end{proof}

\begin{conjecture} \label{conj: A-B-conj}
    For $n \geq 3$, $|\cB_n| \ge |\A_{n-2}|$.
\end{conjecture}

Experimentally this seems true, but one would have to construct an injection that does not rely on the reduction operators and their inverse moves from Lemma \ref{lem: moves}. This would give a lower bound of $1.272$ which is very close to the conjectured growth rate in Huang. It also experimentally seems true that $\A_n$ and $\C_n(132,213)$ have the same growth rate. Since $\A_n \subseteq \C_n(132,213)$ one direction is immediate, the content is the other. We pose this as a question instead of a conjecture,

\begin{question} \label{Q: C-to-A-growth}
    Do $\A_n$ and $\C_n(132,213)$ have the same exponential growth rate? More strongly, is $\liminf_{n \to \infty} |\A_n|/|\C_n(132,213)| > 0$?
\end{question}

\section{Restricted Composition Length} \label{sec: restricted-parts}

In this section we explore what our reduction algorithm tells us about permutations whose compositions have restricted length, i.e. the corresponding permutation has a restricted number of layers. Recall that a permutation is Grassmannian if it has at most one descent.

\begin{theorem} \label{thm: two-parts}
    For $n \geq 2$, if $\kappa(\pi) = (a_1,a_2)$, then $\pi \in \C_n(132,213)$ if and only if $\gcd(a_1,a_2) = 1$. As a result, the number of Grassmannian $\pi \in \C_n(132,213)$ is $\phi(n)$, where $\phi$ is Euler's totient function.
\end{theorem}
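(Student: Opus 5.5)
Our plan is to run the reduction of Theorem \ref{thm: reduction} on two-part compositions and check that it is exactly the subtractive Euclidean algorithm. Take $\kappa(\pi)=(a_1,a_2)$. If $a_1=a_2$, then Proposition \ref{prop: unequal-ends} gives non-cyclic for $n>2$, and $\gcd(a_1,a_2)=a_1>1$ in that case, so the two sides agree. The case $n=2$ is $(1,1)$, which is cyclic with $\gcd=1$. Otherwise, by Proposition \ref{prop: reverse} we may assume $a_1>a_2$. The remark after Theorem \ref{thm: reduction} gives the reduced compositions directly.
\begin{itemize}
\item Case $A$ sends $(a_1,a_2)\mapsto(a_2,2a_2-a_1)$.
\item Case $B$ sends $(a_1,a_2)\mapsto(a_1-2a_2,a_2)$.
\item Case $C$ sends $(2a_2,a_2)\mapsto(a_2)$.
\end{itemize}
Cases $A$ and $B$ again give two-part compositions, and each preserves cyclicity in both directions.

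The key step is to show that $\gcd$ is invariant under each move. In case $A$, $\gcd(a_2,2a_2-a_1)=\gcd(a_2,a_1)$, because $2a_2-a_1\equiv -a_1 \pmod{a_2}$. In case $B$, $\gcd(a_1-2a_2,a_2)=\gcd(a_1,a_2)$ for the same reason. Reversal clearly preserves $\gcd$. The size strictly decreases, so we may induct on $n$ with the statement ``$(a_1,a_2)$ is cyclic iff $\gcd(a_1,a_2)=1$'', which also covers the terminal situations.

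The terminal situations are as follows.
\begin{itemize}
\item If the process reaches case $C$, the result is the one-part composition $(a_2)$, i.e.\ the identity of size $a_2$. This is cyclic iff $a_2=1$. Here $\gcd(2a_2,a_2)=a_2$, so the two conditions agree.
\item If it reaches $(a,a)$ with $n\ge 3$, the permutation is not cyclic by Proposition \ref{prop: unequal-ends}, and $\gcd=a\ge 2$.
\item It may reach $(1,1)$, which is cyclic with $\gcd 1$.
\end{itemize}
One should also double-check the small boundary case $n=3$ (e.g.\ $(2,1)\to(1)$ via $C$), where the theorem requires $n\ge3$. I expect the only mild obstacle to be the bookkeeping that Theorem \ref{thm: reduction} needs $n\ge 3$, which forces the base cases to be handled separately. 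These are $n=2$ and one-part compositions, and they are covered above.

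For the count, note that Grassmannian permutations in $\S_n(132,213)$ are exactly those with at most one descent. These correspond to compositions of length at most $2$. The length-$1$ composition is the identity, which is cyclic only for $n=1$; for $n\ge2$ it is excluded. So we count pairs $a_1+a_2=n$ with $a_1,a_2\ge1$ and $\gcd(a_1,n-a_1)=\gcd(a_1,n)=1$. That is the number of $a_1\in[1,n-1]$ coprime to $n$, which is $\phi(n)$ for $n\ge2$.
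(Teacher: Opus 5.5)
Your proposal is correct and follows the paper's proof essentially verbatim: the reduction restricted to two-part compositions is the subtractive Euclidean algorithm, $\gcd$ is invariant under moves $A$, $B$ and $R$, and the terminal cases ($C$ producing $(a_2)$, equal parts $(b,b)$, or $(1,1)$) are checked against $\gcd = 1$. Your strong induction on $n$ is only a repackaging of the paper's appeal to termination of Algorithm \ref{alg: reduce}, and it has the small merit of stating the $n=2$ and equal-parts base cases explicitly, via Proposition \ref{prop: unequal-ends}.
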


\begin{proof}
    The Grassmannian permutations in $\S_n(132,213)$ are precisely those whose composition has exactly two parts; the one-part composition gives the identity, which is not cyclic for $n \ge 2$. So we must count the pairs $(a_1,a_2)$ with $a_1 + a_2 = n$ for which $\kappa^{-1}(a_1,a_2)$ is cyclic.

    Suppose $\kappa(\pi) = (a_1,a_2)$ with $a_1 > a_2$, and set $g = \gcd(a_1,a_2)$. As long as $a_1 \neq 2a_2$, the reduction stays within two-part compositions. Indeed, move $A$ replaces $(a_1,a_2)$ by $(a_2, 2a_2-a_1)$ and move $B$ replaces it by $(a_1-2a_2,a_2)$, and in both cases the gcd of the two parts is unchanged since 
    \[
    \gcd(a_1,a_2) = \gcd(2a_2-a_1,a_2) = \gcd(a_1-2a_2,a_2).
    \] 
    Move $R$ also preserves the gcd. Since each of $A$ and $B$ strictly decreases $a_1+a_2$, the reduction reaches, after finitely many steps, either $(b,b)$ with $b = g$, or the case $b_1 = 2b_2$, at which point move $C$ applies and produces $(b_2)$.

    If $g > 1$, then no reduced composition can be $(1,1)$ or $(1)$, since $g$ divides both parts throughout and divides the surviving part after move $C$. Hence $\pi$ is not cyclic.

    On the other hand, suppose $g = 1$. The two parts can never become equal, since equality would force $g > 1$, unless both equal $1$. Thus the reduction terminates either at $(1,1)$, or by applying move $C$ to some $(2b_2,b_2)$ with $\gcd(2b_2,b_2) = b_2 = g = 1$, producing $(1)$. In either case Algorithm \ref{alg: reduce} shows that $\pi$ is cyclic.

    Therefore the Grassmannian elements of $\C_n(132,213)$ are exactly the $\kappa^{-1}(k,n-k)$ with $1 \le k \le n-1$ and $\gcd(k,n-k) = \gcd(k,n) = 1$, of which there are $\phi(n)$.
\end{proof}

We now consider the case where the composition has three parts, it relies on the previous result,

\begin{theorem}
    For $n \ge 1$, if $\kappa(\pi) = (a_1,a_2,a_3)$, then $\pi \in \C_n(132,213)$ if and only if $\gcd(a_1+a_2,a_2+a_3) = 1$.
\end{theorem}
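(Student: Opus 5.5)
The plan is to prove the statement by strong induction on $n$, using the reduction of Theorem \ref{thm: reduction}. The key observation is that the quantity $\gcd(a_1+a_2,a_2+a_3)$ is invariant under each reduction move, in the same way that $\gcd(a_1,a_2)$ was invariant in the proof of Theorem \ref{thm: two-parts}.

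Write $x = a_1+a_2$ and $y = a_2+a_3$. The quantities $x$ and $y$ are exactly the two proper partial sums of $(a_1,a_2,a_3)$, taken from the left and from the right. Reversal (move $R$) swaps $x$ and $y$, and by Proposition \ref{prop: reverse} it preserves cyclicity. So we may assume $a_1 \ge a_3$, that is, $x \ge y$.

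\textbf{Case $a_1 = a_3$.} Then $x = y = a_1+a_2 \ge 2$, so the gcd is not $1$. Since $n \ge 3$, Proposition \ref{prop: unequal-ends} says $\pi$ is not cyclic. Both sides of the equivalence are false, so it holds. This case also covers every terminal state of Algorithm \ref{alg: reduce} that still has three parts.

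\textbf{Case $a_1 > a_3$.} We compute the image under each move.
\begin{itemize}
\item Case $A$ gives $(a_3,a_2,2a_3-a_1)$. Its two partial sums are $a_2+a_3 = y$ and $a_2+2a_3-a_1 = 2y-x$. Since $\gcd(y,2y-x) = \gcd(x,y)$, the gcd is unchanged.
\item Case $B$ gives $(a_1-2a_3,a_3,a_2)$. Its partial sums are $a_1-a_3 = x-y$ and $a_3+a_2 = y$. Since $\gcd(x-y,y)=\gcd(x,y)$, the gcd is again unchanged.
\end{itemize}
In both cases the result is a three-part composition of smaller size $m$. By Theorem \ref{thm: reduction} and the induction hypothesis, $\pi$ is cyclic iff $\gcd(x,y)=1$.

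\textbf{Case $C$ ($a_1 = 2a_3$).} The result is the two-part composition $(a_3,a_2)$. Here $\gcd(x,y) = \gcd(2a_3+a_2,\,a_3+a_2) = \gcd(a_3,\,a_3+a_2) = \gcd(a_3,a_2)$. Theorem \ref{thm: two-parts} then says $(a_3,a_2)$ is cyclic iff $\gcd(a_3,a_2)=1$. One boundary check is needed: if $(a_3,a_2)$ has size $1$, which is impossible since both parts are positive, or size $2$, which is $(1,1)$, the criterion $\gcd=1$ still matches.

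The only real work is choosing the right invariant. Once $x$ and $y$ are identified as the two proper partial sums, each move acts on the pair $(x,y)$ as a Euclidean-type step, and the remaining checks are routine. I would also confirm two bookkeeping points. First, the hypotheses of Theorem \ref{thm: reduction} ($n\ge 3$, $k\ge 2$) hold, which is automatic for three positive parts. Second, in the small cases the size stays at least $3$ when we stay in three parts, or we drop to Theorem \ref{thm: two-parts} with $n\ge 2$.
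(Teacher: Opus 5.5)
Your proof is correct and is essentially the paper's argument. It uses the same gcd-invariance computations for moves $A$ and $B$, and the same reduction of move $C$ to Theorem~\ref{thm: two-parts} via $\gcd(2a_3+a_2,a_3+a_2)=\gcd(a_3,a_2)$. It is also more careful than the paper's write-up: you make explicit the strong induction, the fact that reversal swaps the two sums, and the terminal case $a_1=a_3$ via Proposition~\ref{prop: unequal-ends}, all of which the paper leaves implicit.
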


\begin{proof}
   We will consider each possible reduction case. First, the two cases that preserve the number of components in the composition. We will argue that they preserve the pairwise sum gcd. If we fall into the case where $2a_3 > a_1 > a_3$, then when we reduce we get $(a_3,a_2,2a_3-a_1)$. In this case, 
   \[
   \gcd(a_2+a_3,a_2 + 2a_3-a_1) = \gcd(a_2+a_3,2(a_2+a_3)-(a_1+a_2))= \gcd(a_2+a_3, a_1+a_2)
   \]
   so the gcd of the pairwise sums is preserved. If we fall into the case where $a_1 > 2a_3$, we reduce to get $(a_1-2a_3,a_3,a_2)$. The gcd of the pairwise sums is then
   \[
   \gcd((a_1-2a_3)+a_3,a_3+a_2)=\gcd((a_1+a_2)-(a_2+a_3),a_2+a_3) = \gcd(a_1+a_2, a_2+a_3)
   \]
   so again we find the pairwise sum gcd is preserved. Finally, if we have $a_1 = 2a_3$, our composition reduces to $(a_3,a_2)$. In which case, by Theorem \ref{thm: two-parts}, the composition is cyclic if and only if $\gcd(a_3,a_2) = 1$. But the original gcd in this case is $\gcd(2a_3+a_2,a_2+a_3) = 1$. This is the same as $\gcd(a_3,a_2+a_3) = \gcd(a_3,a_2) = 1$. Since the first two reductions preserve the pairwise sum gcd and the final property is cyclic so long as $\gcd(a_1+a_2,a_2+a_3) = 1$, the result follows.
\end{proof}

It appears that when the composition has more than four components, this type of result becomes far more difficult. It would still be interesting to prove a version of these results for other compositions with restricted lengths.

\section{Balanced Results} \label{sec: balanced}

In this section, we prove some new structural results about the permutations in $\C_n(132,213)$ whose compositions are balanced. Our hope is that these tools could help prove \cite[Conjecture 5.1]{H19}. Let $\C^B = \bigcup_{n \geq 1} \C_{2n}^B(132,213)$.

\begin{lemma} \label{lem: move-unbalance}
    If $\pi \in \C^B$ with $\kappa(\pi) = (a_1,a_2,\dots,a_k)$
    \begin{itemize}
        \item If $a_1 > a_k$, then $\Phi_A(\pi) \in \C^B$
        \item If $a_1 \not= n/2$ then $\Phi_B(\pi) \in \C^B$
        \item $\Phi_C(\pi) \in \C^B$
        \item $\Phi_R(\pi) \in \C^B$
        \item If $\pi \not\in \C^B$ then $\Phi_A(\pi)$, $\Phi_B(\pi)$, $\Phi_C(\pi)$ and $\Phi_R(\pi)$ are not in $\C^B$.
    \end{itemize}
\end{lemma}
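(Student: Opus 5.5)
The plan is to argue directly from the definition of balance in terms of the partial-sum set $S_{\kappa(\pi)}$. A composition of size $2n'$ is balanced exactly when $n' \in S_{\kappa(\pi)}$. Each move $\Phi_A,\Phi_B,\Phi_C,\Phi_R$ changes the composition only at its two ends. So I will track how each move changes the total size and shifts the interior partial sums, and check whether the midpoint stays a partial sum. Cyclicity of the image is already given by Lemma~\ref{lem: moves}, so only balance needs checking. Proposition~\ref{prop: reverse} handles cyclicity under $\Phi_R$.

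For the first four bullets I will write $\kappa(\pi)=(a_1,\dots,a_k)$ with $a_1+\cdots+a_j=N/2$, where $N$ is the size.
\begin{itemize}
\item $\Phi_R$ sends partial sum $s$ to $N-s$, so it fixes $N/2$.
\item $\Phi_A$ adds $a_1-a_k$ to the first part and $a_1-a_k$ to the last part. The new midpoint is $N/2+(a_1-a_k)$, and $a_1'+\dots+a_j'$ equals exactly this. Here $j<k$ still indexes an interior cut, since the last part stays separate.
\item $\Phi_C$ adds $a_1$ to the front, by doubling $a_1$, and appends a new last part $a_1$. Again the cut at $j$ moves to $N/2+a_1$, which is the new midpoint.
\item For $\Phi_B$ there are two cases. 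If $j\ge 2$, the new composition $(a_1+a_2,a_3,\dots,a_k,a_2)$ has its first $j-1$ parts summing to $N/2+a_2$, which is the new midpoint. If $j=1$, then $a_1=N/2$; this is exactly the excluded case, matching the first paragraph of the proof of Lemma~\ref{lem: moves}.
\end{itemize}

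The last bullet is the converse, and I will prove it by contrapositive: if $\Phi_X(\pi)\in\C^B$ then $\pi\in\C^B$. The plan is to run each computation above backwards. A balanced image has some cut at its midpoint, and I need that cut to lie strictly inside the unchanged middle region. Then subtracting the added amount gives a cut at the midpoint of $\pi$.

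The main obstacle is ruling out cuts at the altered ends. For $\Phi_C$, a cut between the old last part and the new part $a_1$ gives $N+a_1=(N+2a_1)/2$, forcing $N=0$, which is impossible. For $\Phi_A$, a cut just before the last part $a_1$ gives $N+2(a_1-a_k)-a_1=N/2+(a_1-a_k)$, forcing $a_1$ to equal the new first part, which is a contradiction. A cut after the first part $2a_1-a_k$ would force $2a_1-a_k$ to be half the total; I expect to exclude this using Proposition~\ref{prop: unequal-ends} or Lemma~\ref{lem: balance}. For $\Phi_B$, a cut after the first part $a_1+2a_2$ and the cut before the final $a_2$ need the same treatment. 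I also must handle the $\Phi_B$ boundary case, where the image's cut sits exactly where the old $a_2$ was, since this is how an unbalanced $\pi$ could a priori map to a balanced one.

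If these endpoint cases cannot be excluded purely arithmetically, my fallback is Lemma~\ref{lem: balance}. A cut at position $i$ near an end would pair with a cut at $N'-i$ coming from the image's structure, contradicting cyclicity. $\Phi_R$ is immediate because it is an involution preserving balance.
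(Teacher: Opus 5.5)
Your partial-sum bookkeeping for the first four bullets is correct, and your contrapositive plan for the last bullet is the same as the paper's. The gap is the cut immediately after the modified first part of $\Phi_A(\pi)$ and of $\Phi_B(\pi)$.

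You treat this cut as an altered end to be ruled out by Proposition~\ref{prop: unequal-ends} or Lemma~\ref{lem: balance}, or by your fallback. That step cannot work. Neither result says anything about a single partial sum sitting exactly at the midpoint; Lemma~\ref{lem: balance} only concerns $i\le\lfloor (N-1)/2\rfloor$. Moreover, this configuration genuinely occurs among cyclic images. For instance, $(3,2,1)$ is cyclic and balanced, and $\Phi_A(3,2,1)=(5,2,3)$ is cyclic of size $10$ with its midpoint cut right after the first part. Likewise $\Phi_B(2,1,2,1)=(4,2,1,1)$. So no cyclicity argument can exclude it outright.

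The missing observation is that the first part changes by exactly the amount the midpoint moves: $2a_1-a_k=a_1+(a_1-a_k)$ and $a_1+2a_2=(a_1+a_2)+a_2$. Your requirement that the cut lie strictly inside the unchanged middle is therefore unnecessary. Writing $S=S_{\kappa(\pi)}$, one has
\[
S_{\kappa(\Phi_A(\pi))}=\{s+(a_1-a_k): s\in S\},\qquad S_{\kappa(\Phi_C(\pi))}=\{s+a_1: s\in S\}\cup\{N+a_1\},
\]
\[
S_{\kappa(\Phi_B(\pi))}=\{s+a_2: s\in S,\ s\neq a_1\}\cup\{N+a_2\}.
\]
The midpoints shift by $a_1-a_k$, $a_1$ and $a_2$ respectively.

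Hence a midpoint cut right after the first part forces $a_1=N/2$ for $\Phi_A$, or $a_1+a_2=N/2$ for $\Phi_B$. In either case $\pi$ was already balanced, which is exactly how the paper disposes of these cases. The cut before the final $a_2$ in $\Phi_B(\pi)$ lies at $N+a_2\neq N/2+a_2$.

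The cut before the last part of $\Phi_A(\pi)$ is just the shifted cut $N-a_k$ of $\pi$. Your equation for it reduces to $a_k=N/2$, not to ``$a_1$ equals the new first part''. The contradiction with $a_1>a_k$ still holds, since $a_1+a_k\le N$.

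The separate ``$\Phi_B$ boundary case'' you anticipate does not exist. $\Phi_B$ only deletes the cut at $a_1$, which can destroy balance (the excluded case $a_1=N/2$ of the second bullet) but never create it. With these three identities, all five bullets follow at once.
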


\begin{proof}
    The first four bullets all follow from Lemma \ref{lem: moves}. We wanted to restate them here for completeness. The main content is the fifth bullet point.
    
    If $\pi \not\in \C^B$, let's suppose $\kappa(\pi) = (a_1,a_2,\dots,a_k)$ with $a_1+a_2+ \cdots + a_k = 2n$ and without loss that $a_1 > a_k$. Then $\Phi_A(\pi) = (2a_1 - a_k, a_2,\dots,a_{k-1}, a_1)$ and has size $2n + 2(a_1-a_k)$. If we have $2a_1-a_k = n + a_1-a_k$ this means $a_1 = n$ which implies that $\pi$ is balanced. If instead some prefix sum equals $n + a_1 - a_k$ this means
    \[
    2a_1 - a_k + \sum_{i=2}^j a_i = n + a_1 - a_k.
    \]
    If we subtract $(a_1-a_k)$ from both sides we conclude that $a_1 + a_2 + \cdots + a_j = n$ again contradicting that $\pi \not\in \C^B$. Finally, we have to consider the case that $a_1 = n + a_1 - a_k$, this implies that $n - a_k = 0$ which means $n = a_k$ which is not possible since $a_1 > a_k$.

    Next, suppose $\Phi_C(\pi) = (2a_1,a_2,\dots,a_k,a_1) \in \C^B$. A similar analysis shows that if $\Phi_C(\pi)$ is balanced, then so is $\pi$.
    
    Finally, we will argue that $\Phi_B(\pi) \not\in \C^B$. There are a few cases to consider. First,  since $\pi$ is not balanced, there is never a point where $a_1 + \cdots + a_j = n$. Suppose for sake of contradiction that $\Phi_B(\pi) = (a_1+2a_2,a_3,\dots,a_k,a_2) \in \C_{2n+2a_2}^B$.  There are a few ways for this to be balanced. First, we could have $a_1+2a_2 = n + a_2$, however if this were the case it implies that $a_1 + a_2 = n$ which implies that $\pi \in \C_{2n}^B$ which is a contradiction. If the first element is not equal to $n + a_2$, we could have some initial partial sum $(a_1+2a_2) + \sum_{i=3}^j a_i = n + a_2$ for $3 \leq j \le k$. However, subtracting $a_2$ this means $a_1 + a_2 + a_3 + \cdots + a_j = n$ which implies again that $\pi$ is balanced. Since $a_1 + 2a_2 > a_2$, it can never be the case that $a_2 = n + a_2$ unless $n = 0$. This means $\Phi_B(\pi)$ can never be balanced if $\pi$ was not balanced.
\end{proof}

The final item in the lemma says that when constructing a permutation, if we ever create an unbalanced permutation, it is impossible to re-balance. Another way to read this is that if any of our moves produce a balanced permutation, the original permutation had to be balanced. This is almost a converse to the first four bullet points outside of move $\Phi_B$. This gives us a rigorous way to check when a signature exits being balanced and once it does, it can never return. We discuss some ways to leverage this in the future works section. For now, we prove a couple other structural results about balanced compositions.

\begin{theorem} \label{thm: product}
    For $n \geq 1$, let $\pi \in \S_{2n}(132,213)$, so that $\pi = \sigma_1 \ominus \sigma_2$ for $\sigma_1,\sigma_2 \in \S_n(132,213)$. Then $\pi \in \C_{2n}^B(132,213)$ if and only if $\sigma_2 \sigma_1$ is cyclic.
\end{theorem}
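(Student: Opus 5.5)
Write $\pi=\sigma_1\ominus\sigma_2$ with $\sigma_1,\sigma_2\in\S_n(132,213)$. The statement is an instance of a general fact about skew sums of two halves of equal length, so I will first prove that fact and then use the definition of balanced to finish.

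\textbf{The general fact.} For $1\le i\le n$ we have
\[
\pi(i)=\sigma_1(i)+n \qquad\text{and}\qquad \pi(n+i)=\sigma_2(i).
\]
So $\pi$ maps the first half of the positions $[1,n]$ bijectively onto the top half of the values $[n+1,2n]$, and vice versa. In particular every cycle of $\pi$ alternates between the two halves. Now follow an element $x\in[1,n]$ through two steps:
\[
x\mapsto \sigma_1(x)+n\mapsto \sigma_2(\sigma_1(x)).
\]
Hence $\pi^2$ restricted to $[1,n]$ is exactly the permutation $\sigma_2\sigma_1$, where $\sigma_1$ is applied first, matching the paper's convention $\Phi_w=\Phi_{w_1}\circ\cdots$.

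It remains to compare the cycle structures. Each cycle of $\pi$ has even length $2\ell$ and meets $[1,n]$ in exactly $\ell$ elements. On those $\ell$ elements, $\pi^2$ acts as a single $\ell$-cycle. Conversely, every cycle of $\sigma_2\sigma_1$ lifts to the unique cycle of $\pi$ containing it. This gives a bijection between the cycles of $\pi$ and the cycles of $\sigma_2\sigma_1$. Therefore $\pi$ is a single $2n$-cycle if and only if $\sigma_2\sigma_1$ is a single $n$-cycle. I will phrase the converse direction carefully: if $\sigma_2\sigma_1$ is cyclic, the $\pi$-orbit of $1$ contains all of $[1,n]$, and therefore also all of their images $[n+1,2n]$.

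\textbf{Connecting to balanced.} The decomposition $\pi=\sigma_1\ominus\sigma_2$ with both $\sigma_i$ of size $n$ exists precisely when $n$ is a partial sum of $\kappa(\pi)$. In that case $\kappa(\pi)$ is the concatenation $\kappa(\sigma_1),\kappa(\sigma_2)$, and $\sigma_1,\sigma_2$ automatically avoid $132$ and $213$. So under the hypothesis, $\pi$ is balanced exactly when it is cyclic, and the theorem follows from the general fact. The case $n=1$, where $\pi=21$ and $\sigma_2\sigma_1=1$ is cyclic, is consistent with this.

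The main obstacle is bookkeeping rather than depth. I need to fix the composition-order convention for $\sigma_2\sigma_1$ so that it matches $\pi^2$ on the lower half; the other order gives a conjugate permutation, which has the same cycle type, so either convention is harmless. I also need to state cleanly why the cycles correspond bijectively, namely the alternation between the two halves.
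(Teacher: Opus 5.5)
Your proof is correct and follows the paper's argument: you compute $\pi(i)=\sigma_1(i)+n$ and $\pi(n+i)=\sigma_2(i)$, observe that $\pi$ swaps the two halves so that $\pi^2|_{[1,n]}=\sigma_2\sigma_1$, and note that $n$ being a partial sum makes cyclicity equivalent to membership in $\C_{2n}^B(132,213)$. The only cosmetic difference is in the converse direction: you handle both directions at once via a bijection between the cycles of $\pi$ and those of $\sigma_2\sigma_1$, while the paper uses that the square of a $2n$-cycle splits into two $n$-cycles, one on each half.
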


\begin{proof}
    By definition of the skew sum, $\pi(i) = \sigma_1(i) + n$ for $1 \le i \le n$ and $\pi(n+j) = \sigma_2(j)$ for $1 \le j \le n$. Notice this means $\pi$ interchanges $L = \{1,\dots,n\}$ and $E = \{n+1,\dots,2n\}$, and that $n$ is a partial sum of $\kappa(\pi)$, so every permutation of this form is balanced.

    Since $\pi$ swaps $L$ and $E$, we know $\pi^2$ preserves each of them. This means for $i \in L$,
    \[
    \pi^2(i) = \pi(\sigma_1(i)+n) = \sigma_2(\sigma_1(i)),
    \]
    so $\pi^2|_L = \sigma_2\sigma_1$.

    Now, suppose $\sigma_2\sigma_1$ is cyclic. This means $\pi^2$ is transitive on $L$, so the cycle of $\pi$ containing $1$ contains every element of $L$. But since $\pi(L) =E$, if we apply $\pi$ again, this means the cycle of $\pi$ containing $1$ also contains every element of $E$ as well. So $\pi$ must be a full $2n$-cycle.

    Conversely, suppose $\pi$ is cyclic. Because $\pi$ interchanges $L$ and $E$, no cycle of $\pi^2$ can contain elements from both $L$ and $E$. Since $\pi$ is cyclic and has even length, $\pi^2$ has exactly two cycles, each of length $n$. This forces one cycle to be all of the elements of $L$ and the other to be all of the elements of $E$. In particular $\pi^2|_L = \sigma_2\sigma_1$ is cyclic as desired.
\end{proof}

This means that finding balanced cyclic $132,213$-avoiding permutations is the same as asking which products of $132$ and $213$ avoiding permutations are cyclic. It makes sense, then, that this is a difficult question because we are asking for an algebraic property (multiplication) to interact with avoidance.

\begin{corollary}
    For $n \geq 1$, $|\C_n(132,213)|$ is equal to the number of $\pi \in \C_{2n}^B(132,213)$ with $\kappa(\pi) = (n,a_2,\dots,a_k)$.
\end{corollary}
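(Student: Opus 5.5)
We will deduce this from Theorem \ref{thm: product} by choosing the first half of the balanced composition to be trivial. Write $\pi \in \S_{2n}(132,213)$ with $n$ as a partial sum as $\pi = \sigma_1 \ominus \sigma_2$, where $\sigma_1,\sigma_2 \in \S_n(132,213)$. The condition that $\kappa(\pi)$ begins with the part $n$ says precisely that the first block of $n$ entries is a single increasing run. In other words, $\sigma_1 = {\rm id}_n$, so $\kappa(\pi) = (n) \ast \kappa(\sigma_2)$, where $\ast$ is concatenation. Conversely, any $\sigma_2 \in \S_n(132,213)$ gives $\pi = {\rm id}_n \ominus \sigma_2$ with $\kappa(\pi) = (n,a_2,\dots,a_k)$ and $(a_2,\dots,a_k) = \kappa(\sigma_2)$. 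Such a $\pi$ has $n$ as a partial sum, so it is balanced whenever it is cyclic. Thus $\sigma_2 \mapsto {\rm id}_n \ominus \sigma_2$ is a bijection from $\S_n(132,213)$ onto the set of $\pi \in \S_{2n}(132,213)$ whose composition has first part $n$.

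Next, apply Theorem \ref{thm: product} with $\sigma_1 = {\rm id}_n$. It says ${\rm id}_n \ominus \sigma_2 \in \C^B_{2n}(132,213)$ if and only if $\sigma_2 \sigma_1 = \sigma_2$ is cyclic. Since $\sigma_2$ already avoids $132$ and $213$, this holds if and only if $\sigma_2 \in \C_n(132,213)$. So the bijection above restricts to a bijection between $\C_n(132,213)$ and the set of $\pi \in \C^B_{2n}(132,213)$ with $\kappa(\pi) = (n,a_2,\dots,a_k)$, which gives the claimed equality.

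The only point needing care is the edge case $k=1$, i.e.\ $\kappa(\pi) = (n)$ alone. That would have size $n$, not $2n$, so every composition in the count has $k \ge 2$ and the remainder $(a_2,\dots,a_k)$ is a genuine composition of $n$. For $n=1$, $\sigma_2 = 1$ gives $\pi = 21$, which is cyclic and has composition $(1,1)$, consistent with $|\C_1| = 1$. We also need that the decomposition $\pi = \sigma_1 \ominus \sigma_2$ is unique once the split point $n$ is fixed, which is immediate from the definition of $\ominus$. We do not expect any real obstacle: the content is entirely in Theorem \ref{thm: product}, and the main step is recognizing that a first part equal to $n$ forces $\sigma_1$ to be the identity.
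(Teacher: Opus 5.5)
Your proposal is correct and follows the paper's proof. A first part equal to $n$ forces $\sigma_1 = {\rm id}_n$, and Theorem \ref{thm: product} then reduces cyclicity of ${\rm id}_n \ominus \sigma_2$ to cyclicity of $\sigma_2$; your explicit bijection $\sigma_2 \mapsto {\rm id}_n \ominus \sigma_2$ and your edge-case checks only spell out what the paper leaves implicit.
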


\begin{proof}
    The condition $\kappa(\pi) = (n,a_2,\dots,a_k)$ says exactly that $\sigma_1 = {\rm id}_n$. By Theorem \ref{thm: product}, $\pi$ is cyclic if and only if $\sigma_2\sigma_1 = \sigma_2$ is, so this exactly corresponds to the case where $\sigma_2$ must be cyclic.
\end{proof}

Notice that these are precisely the elements of $\C_{2n}^B(132,213)$ that $\Phi_B$ sends to unbalanced compositions. 

\begin{example}
    Let $\pi = \kappa^{-1}(2,2,1,3) = 78564123 \in \S_8(132,213)$. Here $n = 4$, $\sigma_1 = 3412$ and $\sigma_2 = 4123$. Notice that in this case $\sigma_1 = (1,3)(2,4)$ is not cyclic. But the product $\sigma_2\sigma_1 = 2341$, which is the cyclic permutation $(1,2,3,4)$. So Theorem \ref{thm: product} guarantees that $\pi = \sigma_1 \ominus \sigma_2 \in \C_8^B(132,213)$, and indeed $\pi = (1,7,2,8,3,5,4,6)$.
\end{example}

\section{Future Work} \label{sec: future-work}

Despite our best efforts, we could not use any of the new tools to improve the upper bound on the growth rate. A few possible approaches include:
\begin{itemize}
    \item Trying to better understand the size of a composition corresponding to a given signature. Each move except $R$ increases the size of the composition by at least $2$. We believe that outside of special cases, the size is often increased by more than $2$. Also, not every word on $\{A,B,C,R\}$ is a valid signature: $R$ can never be applied twice, and $R$ will never be applied after $A$ because $A$ preserves the property that $a_1 > a_k$. This means the number of signatures of length $m$ grows slower than $4^m$.

    It is possible that with further study, by getting a bound on the number of signatures of length $m$ one could produce an upper bound on the number of $\pi \in \C_n(132,213)$.

    \item If Question \ref{Q: C-to-A-growth} is answered positively, one could find an upper bound on $\C_n(132,213)$ by bounding $\cB_n$ from above in Lemma \ref{lem: AtoAB}.

    \item If one could get a better understanding of which pairs $\sigma_1,\sigma_2 \in \S_n(132,213)$ have $\sigma_2\sigma_1$ cyclic, Theorem \ref{thm: product} could give a lower bound on the number of balanced permutations, which could lead to a general lower bound.
\end{itemize}

Using some of the results in Section \ref{sec: balanced}, it might be possible to approach Huang's conjecture

\begin{conjecture}{\cite[Conjecture 5.1]{H19}}
    For even $n$,
    \[
    \frac{|\C_n^B(132,213)|}{|\C_n(132,213)|} = \Omega(1).
    \]
\end{conjecture}

One approach could be to first classify the ratio of signatures of a given length that produce balanced vs unbalanced permutations using Lemma \ref{lem: move-unbalance}, then try to use this to prove the conjecture. This would again require some understanding of how the length of a signature connects to the size of the resulting permutation. We conclude by collecting the questions and conjectures posed throughout the paper.

\begin{namedthm*}{Conjecture \ref{conj: A-B-conj}}
    For $n \geq 3$, $|\cB_n| \ge |\A_{n-2}|$.
\end{namedthm*}

This in combination with Lemma \ref{lem: AtoAB} would give an improved lower bound of $1.272$. We had one other question which would be interesting to answer,

\begin{namedthm*}{Question \ref{Q: C-to-A-growth}}
    Do $\A_n$ and $\C_n(132,213)$ have the same exponential growth rate? More strongly, is
    $\liminf_{n \to \infty} |\A_n|/|\C_n(132,213)| > 0$?
\end{namedthm*}

One could then focus on the $\A_n$ sets instead of all of $\C_n(132,213)$ for questions on growth rates. Of course the question of exactly enumerating $\C_n(132,213)$ is still open. Our hope is that some of these tools could help answer this question.

\bibliographystyle{amsplain}

\noindent {\tiny \emph{The views expressed in this paper are those of the authors and do not reflect the official policy or position of the U.S. Naval Academy, Department of the Navy, the Department of Defense, or the U.S. Government.} \par}

\end{document}